\documentclass[12pt,a4paper]{amsart}

\usepackage[utf8]{inputenc}
\usepackage[T1]{fontenc}
\usepackage[english]{babel}

\usepackage{indentfirst}
\usepackage{amssymb}
\usepackage{amsfonts}
\usepackage{amsmath}
\usepackage{amsthm}
\usepackage[top=2.5cm, bottom=2.5cm, left=2.5cm, right=2.5cm]{geometry}
\usepackage{amsopn}
\usepackage[colorlinks]{hyperref}
\usepackage{float}
\usepackage{graphicx}
\usepackage{color}
\usepackage{xspace,colortbl}
\usepackage{xfrac}
\usepackage{siunitx}
\usepackage{bbm}
\usepackage{enumitem}
\usepackage[dvipsnames]{xcolor}
\usepackage{array}
\usepackage{booktabs}
\usepackage{multicol}
\usepackage{multirow}
\usepackage{xspace}

\newtheorem{theorem}{Theorem}[section]

\newtheorem{lemma}[theorem]{Lemma}

\theoremstyle{definition}

\newtheorem{remark}[theorem]{Remark}

\newcommand{\R}{\mathbb{R}}

\newcommand{\qziqdi}{$qZI$ and $qDI$\xspace}

\DeclareMathOperator*{\argmin}{arg\,min}

\DeclareMathOperator{\indicator}{\mathbbm{1}}

\DeclareSymbolFont{bbsymbol}{U}{bbold}{m}{n}
\DeclareMathSymbol{\ind}{\mathbin}{bbsymbol}{'061}

\title[Estimation of quantile inequality curves and measures...]{Estimation of quantile inequality curves and measures based on grouped data}
\author[A{.} Jokiel-Rokita]{Alicja Jokiel-Rokita}
\author[S{.} Pi\c{a}tek]{Sylwester Pi\c{a}tek}

\keywords{inequality measure, inequality curve, income inequality, grouped data}
\subjclass[2020]{Primary 62F10; Secondary 62P20, 62F12}

\address[A.J.-R. and S.P.]{Faculty of Pure and Applied Mathematics\\ Wroc{\l}aw University
	of Science and Technology\\
	Wybrze\.ze Wyspia\'nskiego 27,
	50-370 Wroc{\l}aw, Poland
}

\address[S.P.]{Dioscuri Centre in Topological Data Analysis \\
Institute of Mathematics\\
Polish Academy of Sciences\\
	\'{S}niadeckich 8,
	00-656 Warsaw, Poland
}

\email{alicja.jokiel-rokita@pwr.edu.pl}

\email{sylwester.piatek@pwr.edu.pl}

\begin{document}

\begin{abstract}
Estimation of quantile inequality curves and measures is considered in 
a~parametric model based on grouped data.
The unknown parameters of the distribution are estimated using the minimum divergence method, using various $\phi$-divergences. 
The consistency of the plug-in estimators of the inequality curves and measures and the asymptotic normality of the indices estimators are proved. 
In a~simulation study, the methods are verified and compared in terms of the accuracy of the estimation. The practical applications of the proposed methods
are illustrated by the analysis of two real data sets.
\end{abstract}

\maketitle

\section{Introduction}\label{sec:intro}

Income inequality remains an important topic of research. 
In many cases, inference must be done based on limited information when data are available only in grouped form. 
This type of data arises in various situations, 
for example, if data are aggregated to avoid sharing too much information or as responses to a~survey question with a~limited number of options.
In this paper, we focus on the~case where observations are given in the form of a~frequency table with fixed limits of the bins.
Our goal is to estimate the quantile inequality curves and measures in a~parametric model based on such data. 

The most popular and widely used tools for income inequality analysis are the Lorenz curve and the Gini index (see, for example, \cite{Mukhopadhyay2021}).
However, the Lorenz curve and the Gini index, corresponding to a~nonnegative random variable, are well-defined only when the expected value of this variable is finite. 
But in the case of income data analysis, where inequality curves and measures are of particular importance, 
the Pareto or Dagum distributions are often adopted, and for certain parameter values these do not have finite expected values
Some alternative curves and measures which are well-defined also for probability distributions having infinite expected values can be considered instead. 
These can be quantile versions of Lorenz curves (see, for example, \cite{Prendergast2016} or \cite{Siedlaczek2018}) and quantile counterparts of Zenga and $D$ curves (see, for example, \cite{Prendergast2018} and \cite{JokPia2024}), namely $qZ$ and $qD$ curves, together with their associated inequality measures $qZI$ and $qDI$.
We focus here on two quantile inequality curves $qZ$ and $qD$ and measures $qZI$ and $qDI$ proposed by Prendergast and Staudte \cite{Prendergast2018}. 

Let $Q_{\theta}$, $\theta\in\Theta$, be a~quantile function of an observable nonnegative random variable.
The curves $qZ$ and $qD$, corresponding to the distribution with quantile function $Q_{\theta}$, are defined as follows
\begin{equation}\label{e:qZ}
qZ(p;Q_{\theta})=1-\frac{Q_{\theta}(p/2)}{Q_{\theta}((1+p)/2)}, \qquad p\in(0,1),
\end{equation}
\begin{equation}\label{e:qD}
qD(p;Q_{\theta})=1-\frac{Q_{\theta}(p/2)}{Q_{\theta}(1-p/2)}, \qquad p\in(0,1),
\end{equation}
and $qZ(0;Q_{\theta})=qZ(1;Q_{\theta})=1,$ $qD(0;Q_{\theta})=1,$ $qD(1;Q_{\theta})=0.$
The corresponding indices $qZI$ and $qDI$ are defined as the area under the inequality curves $qZ$ and $qD$, respectively, namely,
\begin{equation*}\label{e:qZI}
qZI(Q_{\theta})=\int_{0}^{1}qZ(p;Q_{\theta})\,dp
\end{equation*}
and
\begin{equation*}\label{e:qDI}
qDI(Q_{\theta})=\int_{0}^{1}qD(p;Q_{\theta})\,dp.
\end{equation*}
Figure \ref{fig:qZ_qD_Weibull_example} shows examples of the $qZ$ and $qD$ curves corresponding to the Weibull distribution with various values of the shape parameter.
Both curves are scale invariant, thus the scale parameter does not affect the values of the curves.

\begin{figure}
    \centering
    \includegraphics[width=1.0\linewidth]{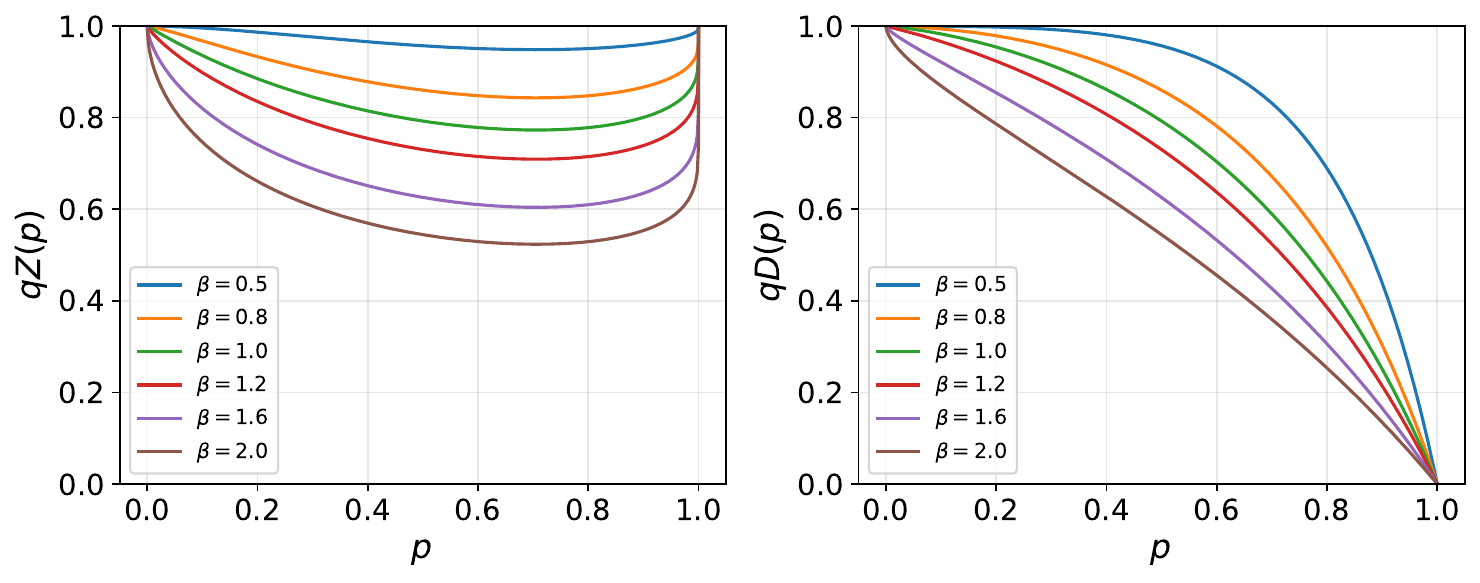}
    \caption{Example of $qZ$ and $qD$ curves for random variables from the Weibull distribution with shape parameters varying between 0.5 and 2}
    \label{fig:qZ_qD_Weibull_example}
\end{figure}

Estimation of the $qZ$ and $qD$ curves and the $qZI$ and $qDI$ indices in a~parametric model based on the complete sample was considered by Piątek \cite{Piatek2025}.
In this paper, we focus on estimating these curves and measures based on grouped data.
We propose using plug-in estimators $\widehat{qZ}(\cdot;Q_{\hat{\theta}})$,
$\widehat{qD}(\cdot;Q_{\hat{\theta}})$, $\widehat{qZI}(Q_{\hat{\theta}})$, and $\widehat{qDI}(Q_{\hat{\theta}})$, where $\hat{\theta}$ is a~minimum divergence estimator of the unknown parameter $\theta.$
We investigate the properties of the proposed estimators of the inequality curves and measures and compare their accuracy when estimators of the parameter $\theta$ are determined based on various divergences.

The estimation of parameters based on grouped data was considered in the literature, for example, in 
\cite{VictoriaFeser1997}, \cite{Johnk2002}, \cite{Spasova2024}.
Dedduwakumara and Prendergast \cite{Dedduwakumara2019} estimated the Gini index and several other inequality measures using linear interpolation to reconstruct the densities from bins when $Q_{\theta}$ corresponds to a~generalised lambda distribution.
The problem of estimating the inequality measures (mainly in the context of estimating the Gini index) based on grouped data was also considered in \cite{Jorda2021}, \cite{Miljkovic2021}, \cite{Lyon2016}, \cite{Carr2022}.

The article is organised as follows. 
Section \ref{sec:methodology} describes the model and methods for estimating both the model parameters and the corresponding curves and inequality measures.
The properties of the estimators considered are proved in Section \ref{sec:properties}.
The results of the simulation study are presented in Section \ref{sec:simulation_study}. 
Two examples of analysis of real data sets are included in Section \ref{sec:real_data}. Finally, the results are concluded in Section \ref{sec:conclusions}.

\section{Minimum divergence estimators of the parameters and the plug-in estimators of curves and indices}\label{sec:methodology}

\subsection{Model assumptions}\label{sec:model}

Let $X_1,\ldots,X_n$ be iid random variables from a~distribution with the cumulative distribution function $F_{\theta_0}$ being an element of the family $\{F_\theta\colon\theta\in\Theta\}$.
We make the following assumptions regarding the distribution function.
\begin{enumerate}[label={(A}\arabic*{)}]
    \item For every $\theta\in\Theta$, the cdf $F_\theta$ is an absolutely continuous and strictly increasing function, $F_{\theta}(0)=0$;
    \item the true value $\theta_0$ of the $s$-dimensional parameter $\theta$ is an interior point of a~parameter space $\Theta$, which is a~compact subset of $\mathbb{R}^s$; \label{ass:interior}
    \item for every $p\in(0,1),$  $F_{\theta}^{-1}(p)=Q_{\theta}(p)$ is a~continuous function of the parameter $\theta.$
\end{enumerate}
Let $[0,c_1), [c_1,c_2), \ldots, [c_{k-1},\infty)$ be a~partition of the support of the random variables $X_i$ into $k$ disjoint intervals. We assume that $k>s+1$, to guarantee a~positive number of the degrees of freedom in the goodness-of-fit statistics.
For each $i=1,\ldots,n,$ we have no information about the value of the variable $X_i$, but we only know which interval this value belongs to.
Denote by $g$ a~vector function of the parameter $\theta$ such that $g(\theta)=(g_1(\theta),\ldots,g_k(\theta))$, where 
\begin{align*}
    g_i(\theta)=F_\theta(c_i)-F_\theta(c_{i-1})
\end{align*}
with $c_0=0$ and $c_k=\infty$.
By $\Pi_0$ we denote a~set of probability vectors such that $g$ maps each element $\theta\in \Theta$ to an element from $\Pi_0 \subset \Delta_k = \{P = (p_1, p_2, \ldots, p_k)\colon
p_i \geq 0;\ i = 1, \ldots, k \text{ and } \sum_{i=1}^{k} p_i = 1\}$.
We also make the following assumptions regarding the function~$g.$
\begin{enumerate}[label={(B}\arabic*{)}]
    \item $g_{0i} \equiv g_i({\theta}_0) > 0$ for $i = 1, \ldots, k$. Hence ${g_0}$ is an interior point of the $(k-1)$-dimensional simplex $\Delta_k$;
    \item the mapping $g\colon \Theta \to \Delta_k$ is totally differentiable at ${\theta}_0$, so that the partial derivatives of $g_i$ with respect to each $\theta_j$ exist at ${\theta}_0$ and $g({\theta})$ has a~linear approximation at ${\theta}_0$ given by
    \begin{equation*}
  g(\theta)
  = g(\theta_{0})
  + \frac{\partial g(\theta_{0})}{\partial\theta}\,(\theta - \theta_{0})
  + o\bigl(\|\theta - \theta_{0}\|\bigr),
  \qquad \text{as } \theta \to \theta_{0},
    \end{equation*}
    where $\frac{\partial g({\theta}_0)}{\partial {\theta}}$ is a~$k \times s$ matrix with the $(i,j)$-th element $\frac{\partial g_i({\theta}_0)}{\partial \theta_j}$;
    \item the Jacobian matrix $\frac{\partial g({\theta}_0)}{\partial {\theta}}$ is of full rank $s$;
    \item the inverse mapping $g^{-1}\colon \Pi_0 \to \Theta$ is continuous at $g({\theta}_0) = {g}_0$;
    \item the mapping $g\colon \Theta \to \Delta_k$ is continuous at every point ${\theta} \in \Theta$;
    \item the function $g\colon \Theta \to \Delta_k$ has continuous second partial derivatives in a~neighbourhood of $\theta_0.$
    \end{enumerate}
It can be shown that the Weibull and Dagum distributions, which are of particular importance in the analysis of income inequality, satisfy the above assumptions (A1), (A3) and (B1)--(B6).
When the parameter space of these distributions is narrowed to a~compact set, condition (A2) holds too.

\subsection{Estimation methods}

In the parametric model considered, we will estimate unknown inequality curves and measures using the plug-in method, i.e., for $p\in(0,1)$, the curve $qZ$ is estimated by a~plug-in estimator $qZ(p;Q_{\hat{\theta}})$ and $qD$ by $qD(p;Q_{\hat{\theta}})$. A~plug-in estimator of $qZ$ has a~value of 1 for $p\in\{0,1\}$, while a~plug-in estimator of $qD$ has a~value of 1 for $p=0$ and a~value of 0 for $p=1$.
In the following, we describe methods to estimate the unknown parameter $\theta$ based on grouped data, which we will use to estimate inequality curves and measures.

Denote
\begin{equation*}
n_i=\sum_{j=1}^{n}\indicator_{[c_{i-1},c_{i})}(X_{j}),
\end{equation*}
where $\indicator_{[c_{i-1},c_{i})}(X_{j})$ equals one, when $X_j\in[c_{i-1},c_{i})$, and zero otherwise.
Let $P=(p_1,\ldots,p_k)$, where 
\begin{equation*}
p_{i}=\frac{n_i}{n},
\end{equation*}
and $G(\theta)=(g_1(\theta),\ldots,g_k(\theta))$, where 
\begin{equation*}
g_i(\theta)=F_\theta(c_i)-F_\theta(c_{i-1}),
\end{equation*}
$i=1,\ldots,k.$

A minimum divergence (MD) estimator $\hat{\theta}$ of the parameter $\theta$ is defined as
\begin{equation}\label{eq:MDestDef}
 \hat{\theta}= \argmin_{\theta\in\Theta} d(P,G(\theta)),
\end{equation}
where $d(\cdot,\cdot)$ is a~divergence between two distributions $P$ and $G(\theta)$.

A~broad class of divergences, namely the class of $\phi$-divergences was proposed in 1963 independently by Csiszár \cite{Csiszar1963} and Morimoto \cite{Morimoto1963}.
However, the idea behind this class was presented in 1961 by Rényi \cite{Renyi1961}.
This class is defined as follows.
\begin{equation*}\label{eq:phidiv}
    d_{\phi}(P,G(\theta)) = \sum_{i=1}^{k} g_i(\theta)\phi\left(\frac{p_i}{g_i(\theta)}\right),
\end{equation*}
for any continuous, convex function $\phi:[0, \infty) \to \mathbb{R}\cup\{\infty\}$, 
where $\phi(1)=0$, $0\phi(0/0)=0$, and $0\phi(t/0) = t \lim_{u\to\infty}(\phi(u)/u)$, for $t>0$
(see, for example, \cite{Menendez1997} or \cite{Pardo2005}).
If
\begin{equation*}
    \phi(x)=\frac{2}{\lambda(\lambda+1)}\left(x^{\lambda+1}-x\right),
\end{equation*}
then $\phi$-divergences, parameterised by the parameter $\lambda \in \mathbb{R}\setminus \{0,-1\}$, create a~class of power divergences
\begin{align*}
    PD^{(\lambda)}(P, G(\theta)) = \frac{2}{\lambda(\lambda+1)} \sum_{i=1}^k p_i \left[ \left(\frac{p_i}{g_i(\theta)}\right)^\lambda - 1 \right],
\end{align*}
where $PD^{(0)}$ and $PD^{(-1)}$ are defined by continuity, and they are of the form
\begin{align}\label{e:KLD}
    PD^{(0)}(P, G(\theta)) &= 2\sum_{i=1}^k p_i \log\left(\frac{p_i}{g_i(\theta)}\right)=:2KLD(P,G(\theta)),
    \end{align}
\begin{align*}
    PD^{(-1)}(P, G(\theta)) &= 2\sum_{i=1}^k g_i(\theta) \log\left(\frac{g_i(\theta)}{p_i}\right).
\end{align*}
The above class of power divergences (also known as the Cressie-Read family of statistics) was proposed by Cressie and Read \cite{Cressie1984} in 1984.

In this paper, we use several examples of divergences to obtain MD estimators of the parameter in the model considered and, consequently, estimators of inequality curves $qZ$, $qD$ and inequality indices $qZI$, $qDI$. 

One of the divergences we consider is the Kullback--Leibler divergence (KLD), i.e., the $PD^{(0)}$ power divergence \eqref{e:KLD}.
It is particularly important in the problem under consideration because in the case of multinomial distribution, minimising the Kullback--Leibler divergence (KLD) is equivalent to maximising the likelihood function; thus, the MD estimator based on KLD is equal to the ML estimator for the grouped data \cite{Read1988}.
In estimating the model parameters, we also propose using the following divergences such that minimising them is equivalent to minimising special cases of power divergence statistics:
\begin{itemize}
    \item
    the squared Hellinger distance (HD) \cite{Matusita1955} 
\begin{equation*}
    HD(P,G(\theta)) = \frac{1}{{2}}{\sum_{i=1}^{k}\left(\sqrt{p_i}-\sqrt{g_i(\theta)}\right)^2},
\end{equation*}
\item
the Pearson chi-squared statistic \cite{Pearson1900}  
\begin{equation*}
    \chi^2_P(P,G(\theta))  = \sum_{i=1}^{k} \frac{(p_i - g_i(\theta))^2}{g_i(\theta)},
\end{equation*}
\item
the Jensen--Shannon divergence (JSD) 
\begin{equation*}
    JSD(P, G(\theta)) = \frac{1}{2} \sum_{i=1}^{k} \left[ p_i \log\left(\frac{2p_i}{p_i + g_i(\theta)}\right) +  g_i(\theta) \log\left(\frac{2g_i(\theta)}{p_i + g_i(\theta)}\right) \right].
\end{equation*}
\end{itemize}
In fact, the Pearson chi-squared statistic is equal to PD with $\lambda=1$.
The functions $\phi$ corresponding to these divergences can be found in Table \ref{tab:phi_function}.
More examples of $\phi$-divergences and the corresponding $\psi$ functions (where $\psi(x) = \phi(x) - (x-1)\phi'(1)$) are given in \cite{Pardo2005} (page 6).

Sometimes minimising two different divergences leads to the same MD estimator. 
For example, the Bhattacharyya distance \cite{Bhattacharyya1946} leads to the same estimate as the Hellinger distance.
For $\lambda=-\tfrac{1}{2}$ the problem of minimising the power divergence statistic $PD^{\left(-\frac{1}{2}\right)}$ is also equivalent to minimising the Hellinger distance. 
For a~comprehensive summary of the topic of $\phi$-divergences and their applications, see, for example, \cite{Basseville2012}.

\begin{table}[h]
    \centering
    \caption{Formulae for the $\phi$ functions corresponding to several $\phi$-divergences studied}
    \label{tab:phi_function}
    \begin{tabular}{c|c}
        $\phi$ function & divergence
        \\
        \hline
        \\[-0.75em]
        ${\frac{1}{2}\left(\sqrt{x}-1\right)^2}$ & Squared Hellinger distance \cite{Matusita1955} \\
        ${ {\frac {1}{2}}\left(x\log x-(x+1)\log \left({\tfrac {x+1}{2}}\right)\right)}$ & Jensen--Shannon divergence \cite{Lin1991} \\
        $x \log x$ & Kullback--Leibler divergence \cite{Kullback1959} \\
        ${\frac{2}{\lambda(\lambda+1)}\left(x^{\lambda+1}-x\right)}$ & 
        Power divergence \cite{Cressie1984} \\
        ${\displaystyle(x-1)^2}$ & Pearson chi-squared statistics \cite{Pearson1900}
    \end{tabular}
\end{table}

\section{Properties of the estimators}\label{sec:properties}

\subsection{Properties of the parameter MD estimators}
Neyman \cite{Neyman1949} defined the Best Asymptotically Normal (BAN) estimator as an asymptotically efficient and asymptotically normal estimator. 
The asymptotic efficiency of an estimator means that its asymptotic variance attains the Cramér--Rao bound.
We recall a~theorem stating that the minimum $\phi$-divergence estimator is a~BAN estimator under Birch regularity conditions (see, for example, Appendix A5 in \cite{Read1988}), and some additional assumptions. 
Note that assumptions (A2) and (B1)--(B5) are the Birch conditions.

\begin{theorem}[Theorem 5.2 in \cite{Pardo2005}]\label{thm:phi_divergence_ban}
    Let $\phi$ be a~twice continuously differentiable function for $x>0$ with $\phi''(1)>0$.
    Under Birch regularity conditions and assuming that the function 
    $g\colon \Theta \to \Delta_k$
 has continuous second partial derivatives in a~neighbourhood of $\theta_0$, the minimum $\phi$-divergence estimator is a~BAN estimator, namely
 \begin{equation}
     \sqrt{n} \left( \hat{\theta}_n^\phi - \theta_0 \right) \xrightarrow[]{d}
     \mathcal{N}\left({\bf 0}, I^{-1}_{\theta_0} \right),
 \end{equation}
where ${\bf 0}$ is the vector of zeroes and $I_{\theta}$ is the Fisher information matrix of the multinomial model with $k$ cells.
\end{theorem}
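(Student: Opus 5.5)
We follow the classical Birch argument: first establish consistency of $\hat{\theta}_n^\phi$, then linearize the estimating equations around $\theta_0$, and finally transfer the multinomial central limit theorem for $P$ to $\hat\theta_n^\phi$ by the delta method. Throughout, write $P_n=P$ for the vector of cell frequencies, $g_0=g(\theta_0)$, $D=\mathrm{diag}(g_0)$ and $A=D^{-1/2}\frac{\partial g(\theta_0)}{\partial\theta}$ (a $k\times s$ matrix). Assumption (B3) makes $A$ of full rank $s$, and $I_{\theta_0}=A^\top A$ is the Fisher information of the multinomial model.

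\emph{Step 1 (consistency).} By the strong law of large numbers, $P_n\to g_0$ almost surely. The map $\pi\mapsto d_\phi(\pi,g(\theta))$ is continuous and vanishes only at $\pi=g(\theta)$, since $\phi$ is strictly convex near $1$ because $\phi''(1)>0$. By (B5) and compactness of $\Theta$ from (A2), the minimum in \eqref{eq:MDestDef} is attained. Since $d_\phi(P_n,g(\hat\theta_n))\le d_\phi(P_n,g_0)\to 0$, it follows that $g(\hat\theta_n)\to g_0$. The inverse-continuity assumption (B4) then gives $\hat\theta_n\to\theta_0$ almost surely.

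\emph{Step 2 (estimating equations and linearization).} Because $\theta_0$ is interior by (A2), $\hat\theta_n$ is eventually interior and satisfies
\[
0=\sum_{i=1}^k \frac{\partial g_i(\hat\theta_n)}{\partial\theta_j}\Bigl[\phi\Bigl(\tfrac{p_i}{g_i}\Bigr)-\tfrac{p_i}{g_i}\phi'\Bigl(\tfrac{p_i}{g_i}\Bigr)\Bigr]_{g_i=g_i(\hat\theta_n)},\qquad j=1,\dots,s.
\]
Set $h(x)=\phi(x)-x\phi'(x)$, so that $h'(x)=-x\phi''(x)$ and $h'(1)=-\phi''(1)\neq 0$. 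Using $\sum_i\partial g_i/\partial\theta_j=0$, Taylor expansion of $h$ at $1$ gives
\[
\sum_i \frac{\partial g_i(\hat\theta_n)}{\partial\theta_j}\Bigl(\frac{p_i}{g_i(\hat\theta_n)}-1\Bigr)\bigl(1+o_P(1)\bigr)=0 .
\]
Next use (B2)/(B6) to replace $g(\hat\theta_n)$ by $g_0+\frac{\partial g(\theta_0)}{\partial\theta}(\hat\theta_n-\theta_0)+o(\|\hat\theta_n-\theta_0\|)$, and the derivatives at $\hat\theta_n$ by those at $\theta_0$. This yields
\[
A^\top D^{-1/2}(P_n-g_0)-A^\top A(\hat\theta_n-\theta_0)=o_P\bigl(\|P_n-g_0\|+\|\hat\theta_n-\theta_0\|\bigr).
\]
Since $A^\top A$ is invertible, it follows that
\[
\hat\theta_n-\theta_0=(A^\top A)^{-1}A^\top D^{-1/2}(P_n-g_0)+o_P(n^{-1/2}).
\]

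\emph{Step 3 (asymptotic normality).} The multinomial central limit theorem gives $\sqrt n(P_n-g_0)\to\mathcal N(0,D-g_0g_0^\top)$. Since $A^\top D^{-1/2}g_0=\frac{\partial g(\theta_0)}{\partial\theta}^{\!\top}\mathbf 1=0$, the covariance of $\sqrt n(\hat\theta_n-\theta_0)$ in the limit is
\[
(A^\top A)^{-1}A^\top A(A^\top A)^{-1}=I_{\theta_0}^{-1},
\]
which is the Cramér--Rao bound. Hence $\hat\theta_n^\phi$ is BAN.

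The main obstacle is Step 2. There one must control the remainder uniformly: the Taylor remainders of $h$ and of $g$ have to be $o_P(n^{-1/2})$, which first requires showing $\hat\theta_n-\theta_0=O_P(n^{-1/2})$. I would obtain this by absorbing the $o_P(\|\hat\theta_n-\theta_0\|)$ term into the left-hand side using invertibility of $A^\top A$. Continuity of $\phi''$ near $1$ and of the second derivatives of $g$ from (B6) provide the needed uniformity on a shrinking neighbourhood. An alternative route is to invoke the implicit function theorem for the map $\pi\mapsto\arg\min_\theta d_\phi(\pi,g(\theta))$ near $g_0$, and then apply the delta method.
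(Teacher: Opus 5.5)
The paper gives no proof of its own here: it quotes Theorem 5.2 of \cite{Pardo2005}. The argument in that source goes through the implicit function theorem, which is the alternative you mention in your last sentence. There one sets $\Psi(\pi,\theta)=\nabla_\theta d_\phi(\pi,g(\theta))$. One notes that $\Psi(g_0,\theta_0)=0$ and that $\partial_\theta\Psi(g_0,\theta_0)=\phi''(1)A^\top A$ is nonsingular. This gives a $C^1$ map $\tilde\theta$ near $g_0$ with $\tilde\theta(g_0)=\theta_0$ and derivative $(A^\top A)^{-1}A^\top D^{-1/2}$. The Birch conditions are then used to identify $\hat\theta_n^\phi=\tilde\theta(P_n)$ eventually, and the multinomial CLT plus the delta method finish the proof.

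Your main route is instead Birch's direct stochastic linearization of the first-order conditions, and it is sound:
\begin{itemize}
\item $h'(x)=-x\phi''(x)$, and the $h(1)$ term cancels because $\sum_i\partial g_i/\partial\theta_j=0$.
\item The remainder is $o_P(\|P_n-g_0\|+\|\hat\theta_n-\theta_0\|)$, and absorbing it gives the $\sqrt n$ rate.
\item The identity $A^\top D^{-1/2}g_0=0$ collapses the sandwich to $(A^\top A)^{-1}=I_{\theta_0}^{-1}$.
\end{itemize}
The trade-off is as follows. Your argument uses only continuity of $\phi''$ near $1$ and of the first derivatives of $g$ near $\theta_0$, and it needs no uniqueness of the minimizer. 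The implicit-function route is where the $C^2$ hypothesis on $g$ genuinely enters, since it makes $\Psi$ continuously differentiable in $\theta$. In exchange, that route yields local uniqueness of the minimizer and a deterministic smooth representation $\hat\theta_n^\phi=\tilde\theta(P_n)$, so no separate rate argument is needed.

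One step in your Step 1 needs another sentence. Continuity of $\pi\mapsto d_\phi(\pi,g(\theta))$ for \emph{fixed} $\theta$, together with its unique zero, does not by itself turn $d_\phi(P_n,g(\hat\theta_n))\to0$ into $g(\hat\theta_n)\to g_0$, because both arguments move. The fix is to argue along subsequences:
\begin{itemize}
\item By compactness of $\Theta$, take a subsequence $\hat\theta_{n'}\to\theta^*$.
\item By (B5), $g(\hat\theta_{n'})\to g(\theta^*)$.
\item Use joint continuity of $(\pi,q)\mapsto\sum_i q_i\phi(\pi_i/q_i)$ at the points $(g_0,q)$, $q\in\Delta_k$. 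This holds in the extended-real sense with the stated conventions at $q_i=0$, precisely because every $g_{0i}>0$ by (B1).
\item This gives $d_\phi(g_0,g(\theta^*))=0$, hence $g(\theta^*)=g_0$ by your Jensen/strict-convexity argument, and $\theta^*=\theta_0$ by (B4).
\end{itemize}
With that added, your proposal is a complete proof of the cited result.
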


\begin{theorem}
Under assumptions (A1)--(A2) and (B1)--(B6), the MD parameter estimators based on PD (in special cases KLD, HD, $\chi_{P}^{2}$) and JSD are BAN.
\end{theorem}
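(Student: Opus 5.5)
The plan is to reduce the statement to Theorem~\ref{thm:phi_divergence_ban}. That means checking two things: that the model satisfies its structural hypotheses, and that each divergence in question is a $\phi$-divergence whose $\phi$ is twice continuously differentiable on $(0,\infty)$ with $\phi''(1)>0$.

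\textbf{Structural hypotheses.} As noted before Theorem~\ref{thm:phi_divergence_ban}, the Birch regularity conditions are exactly (A2) and (B1)--(B5). The additional requirement that $g$ have continuous second partial derivatives near $\theta_0$ is (B6). Assumption (A1) ensures that $g_i(\theta)=F_\theta(c_i)-F_\theta(c_{i-1})$ is a genuine cell probability vector in $\Delta_k$, so that $d_\phi(P,G(\theta))$ is well defined and the multinomial model with $k$ cells is the relevant one. Hence every hypothesis of Theorem~\ref{thm:phi_divergence_ban} other than the one on $\phi$ holds.

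\textbf{Conditions on $\phi$.} These are checked directly from Table~\ref{tab:phi_function}.
\begin{itemize}
\item For the power divergence, $\phi(x)=\frac{2}{\lambda(\lambda+1)}(x^{\lambda+1}-x)$ is $C^\infty$ on $(0,\infty)$ for $\lambda\notin\{0,-1\}$, and $\phi''(x)=2x^{\lambda-1}$, so $\phi''(1)=2>0$.
\item The limiting cases are handled separately. For $\lambda=0$, $\phi(x)=x\log x$ gives $\phi''(1)=1$. For $\lambda=-1$, $\phi(x)=-\log x$ up to a linear term gives $\phi''(1)=1$.
\item The special cases follow. Pearson $\chi^2_P$ has $\phi(x)=(x-1)^2$ with $\phi''=2$. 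HD has $\phi(x)=\tfrac12(\sqrt x-1)^2$ with $\phi''(x)=\tfrac14 x^{-3/2}$, so $\phi''(1)=\tfrac14$.
\item For JSD, $\phi(x)=\tfrac12\bigl(x\log x-(x+1)\log\tfrac{x+1}{2}\bigr)$ is smooth on $(0,\infty)$ and $\phi''(x)=\tfrac12\bigl(\tfrac1x-\tfrac1{x+1}\bigr)$, so $\phi''(1)=\tfrac14>0$.
\end{itemize}

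\textbf{Main obstacle: the $\phi$ normalization.} Some $\phi$'s in the table are not normalized in the Pardo form. An example is $x\log x$ versus $x\log x-x+1$, which differ by a multiple of $(x-1)$. Because $\sum_i g_i(\theta)(p_i/g_i(\theta)-1)=0$ for probability vectors, adding $c(x-1)$ leaves $d_\phi$ unchanged. It therefore leaves the minimizer unchanged and does not affect $\phi''$. The same argument covers statistics that differ from the listed divergences by a positive constant factor or a strictly increasing transformation. Examples are $PD^{(0)}=2KLD$ and HD versus $PD^{(-1/2)}$: their argmins coincide, so the estimators are identical.

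Applying Theorem~\ref{thm:phi_divergence_ban} to each $\phi$ then gives $\sqrt n(\hat\theta_n-\theta_0)\to\mathcal N(0,I_{\theta_0}^{-1})$, i.e.\ each estimator is BAN.
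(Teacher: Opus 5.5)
Your proposal is correct and follows the paper's proof: both reduce the claim to Theorem~\ref{thm:phi_divergence_ban} by identifying (A2), (B1)--(B5) as the Birch conditions and (B6) as the second-derivative hypothesis, and then check that each listed $\phi$ is $C^2$ on $(0,\infty)$ with $\phi''(1)>0$. Your JSD computation $\tfrac12(\tfrac1x-\tfrac1{x+1})$ agrees with the paper's $\tfrac{1}{2x(x+1)}$, and your extra remarks (the $\lambda=-1$ case, invariance under adding $c(x-1)$ or rescaling by a positive constant) are harmless additions that the paper does not need.
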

   \begin{proof}
It is enough to show that for each divergence considered, the function $\phi$ satisfies the assumption of Theorem \ref{thm:phi_divergence_ban}.
Table \ref{tab:phi_function} shows the formulae for functions $\phi$ corresponding to several $\phi$-divergences, including the family of PD for every $\lambda\in\mathbb{R}\setminus \{0,-1\}$,  a~limit case of PD for $\lambda=0$ (KLD), and JSD.
All these functions can be differentiated twice continuously with the second derivative strictly larger than 0, for every $x>0$ (thus also for $x=1$). 
That is, in the case of PD, for every $\lambda\in\mathbb{R}\setminus \{0,-1\}$ we have 
\begin{align*}
    \phi''(x) = 2x^{\lambda-1},
\end{align*}
for KLD we have
\begin{align*}
    \phi''(x) = \frac{1}{x},
\end{align*}
for HD we have
\begin{align*}
    \phi''(x) = \frac{1}{4}x^{-\frac{3}{2}},
\end{align*}
for $\chi_{P}^{2}$ we have
\begin{align*}
\phi''(x) = 2,
\end{align*}
and for JSD we have
\begin{align*}
    \phi''(x) = \frac{1}{2x(x+1)}.
\end{align*}
Therefore, the functions $\phi$ corresponding to PD, KLD, HD, $\chi_{P}^{2}$, and JSD satisfy the assumptions of Theorem \ref{thm:phi_divergence_ban}, hence, in the model considered, the MD estimators based on these $\phi$-divergences are BAN.
    \end{proof}
\begin{remark}\label{rem:BAN}
  In the model considered, the MD method, based on KLD, HD, $\chi_{P}^{2}$, JSD, leads to asymptotically equivalent estimators.  
\end{remark}

\subsection{Properties of the inequality curves and measures estimator}

From this point, by $\hat{\theta}_{n}^{\phi}$ we denote an estimator of $\theta$ obtained by minimising a~specific $\phi$-divergence, based on a~sample of size $n$, for example, $\hat{\theta}_{n}^{HD}$ minimises the Hellinger distance, and so on.
A~plug-in estimator $\widehat{Q}^{\phi}_n:=Q_{\hat{\theta}_{n}^{\phi}}$ of the quantile function $Q_{\theta}$ is obtained by inserting ${\hat{\theta}_{n}^{\phi}}$ into the formula for $Q_{\theta}$.
Analogously plug-in estimators $\widehat{qZ}^{\phi}_{n}(\cdot):=qZ(\cdot;\widehat{Q}_{n}^{\phi})$ of $qZ(\cdot;Q_{\theta})$ and $\widehat{qZI}^{\phi}_{n}:=qZI(\widehat{Q}_{n}^{\phi})$ of $qZI(Q_{\theta})$ can be defined, 
and similarly $\widehat{qD}^{\phi}_{n}(\cdot):=qD(\cdot;\widehat{Q}_{n}^{\phi})$ is an estimator of $qD(\cdot;Q_{\theta})$ and $\widehat{qDI}^{\phi}_{n}:=qDI(\widehat{Q}_{n}^{\phi})$ is an estimator of $qDI(Q_{\theta})$.

\begin{theorem}\label{thm:pointwise_convergence_qz}
    Let $\hat{\theta}_{n}^{\phi}$ be a~BAN estimator of the parameter $\theta_0$ 
    in the model described in Section \ref{sec:model}. 
    Then, for every $p\in[0,1]$, we have
    \begin{equation*}
        \widehat{qZ}^{\phi}_{n}(p) \xrightarrow{P} qZ(p;Q_{\theta_0})
    \end{equation*}
    and
        \begin{equation*}
        \widehat{qD}^{\phi}_{n}(p) \xrightarrow{P} qD(p;Q_{\theta_0})
    \end{equation*}
    when $n$ tends to infinity.
\end{theorem}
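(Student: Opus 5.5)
The plan is to combine consistency of $\hat{\theta}_{n}^{\phi}$ with the continuous mapping theorem. First, since $\hat{\theta}_{n}^{\phi}$ is BAN, we have
\begin{equation*}
\sqrt{n}\,(\hat{\theta}_{n}^{\phi}-\theta_0)\xrightarrow{d}\mathcal{N}({\bf 0},I^{-1}_{\theta_0}).
\end{equation*}
Hence $\sqrt{n}(\hat{\theta}_{n}^{\phi}-\theta_0)=O_P(1)$, and therefore
\begin{equation*}
\hat{\theta}_{n}^{\phi}-\theta_0=n^{-1/2}O_P(1)=o_P(1),
\end{equation*}
which is the Slutsky-type argument. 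So $\hat{\theta}_{n}^{\phi}\xrightarrow{P}\theta_0$.

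Next we dispose of the endpoints, which require no probability. For $p\in\{0,1\}$ the plug-in estimators equal the deterministic values $qZ(0;\cdot)=qZ(1;\cdot)=1$, $qD(0;\cdot)=1$ and $qD(1;\cdot)=0$ for every $\theta$. They therefore coincide with the targets, and convergence in probability is trivial.

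Now fix $p\in(0,1)$. The points $p/2$, $(1+p)/2$ and $1-p/2$ all lie in $(0,1)$. By (A3) the maps $\theta\mapsto Q_\theta(p/2)$, $\theta\mapsto Q_\theta((1+p)/2)$ and $\theta\mapsto Q_\theta(1-p/2)$ are continuous on $\Theta$. For the ratio to be continuous at $\theta_0$, the denominators must not vanish there. By (A1), $F_{\theta_0}$ is strictly increasing and continuous with $F_{\theta_0}(0)=0$, so $Q_{\theta_0}(u)>0$ for every $u\in(0,1)$. Indeed, if $Q_{\theta_0}(u)=0$ then $F_{\theta_0}(0)\ge u>0$, a contradiction.

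It follows that
\begin{equation*}
h_p(\theta)=1-\frac{Q_\theta(p/2)}{Q_\theta((1+p)/2)}
\end{equation*}
is continuous at $\theta_0$: the denominator is a continuous function that is positive at $\theta_0$, hence nonzero in a neighbourhood. The same reasoning, with $Q_\theta(1-p/2)$ in the denominator, gives continuity of the corresponding map for $qD$.

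Finally, we apply the continuous mapping theorem for convergence in probability. This only needs continuity at the limit point $\theta_0$, which is the point of the argument above. It yields $h_p(\hat{\theta}_{n}^{\phi})\xrightarrow{P}h_p(\theta_0)$, that is, $\widehat{qZ}^{\phi}_{n}(p)\xrightarrow{P}qZ(p;Q_{\theta_0})$, and analogously for $qD$.

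The only delicate step is positivity of the quantile denominators, which keeps the ratio map continuous at $\theta_0$. We handle it through the strict monotonicity and $F_\theta(0)=0$ in (A1), as shown above. Beyond that, the proof is a direct assembly of consistency and continuity.
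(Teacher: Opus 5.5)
Your proof is correct and follows the same route as the paper: consistency of the BAN estimator, continuity of $\theta\mapsto qZ(p;Q_\theta)$ and $\theta\mapsto qD(p;Q_\theta)$ via (A3), and the continuous mapping theorem. You also spell out details the paper leaves implicit: the trivial endpoint cases $p\in\{0,1\}$, and the positivity of the quantile denominators (from (A1)), which is what makes the ratio continuous at $\theta_0$.
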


\begin{proof}
By assumption, $\hat{\theta}_{n}^{\phi}$ is BAN, so in particular it is a~consistent estimator of the true parameter $\theta_0$.
In the model under consideration, for each $p\in[0,1]$, the functions $qZ(p;Q_{\theta})$, $qD(p;Q_{\theta})$ are continuous functions of the true argument $\theta$ (see assumption (A3)), and the theorem follows from the Continuous Mapping Theorem (see Theorem 2.3 in \cite{VanDerVaart1998}).
\end{proof}

\begin{theorem}\label{thm:qd_conv_eta}
    Let $\hat{\theta}_{n}^{\phi}$ be a~BAN estimator of the parameter $\theta$ in the model described in Section \ref{sec:model}. Then
            \begin{equation*}
        \sup_{p\in[0,1]}
        \left|\widehat{qD}^{\phi}_{n}(p) - qD(p;Q_{\theta_0})\right|
        \xrightarrow{P} 0,
    \end{equation*}
    when $n$ tends to infinity.
    \end{theorem}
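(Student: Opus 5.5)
Since $\hat{\theta}_{n}^{\phi}$ is BAN, it is consistent, so $\hat\theta_n^\phi\to\theta_0$ in probability. The plan is therefore to reduce the statement to a deterministic fact: if $\theta_n\to\theta_0$, then $\sup_{p\in[0,1]}|qD(p;Q_{\theta_n})-qD(p;Q_{\theta_0})|\to0$. We then pass to probability by the subsequence characterisation of convergence in probability. Every subsequence of $\hat\theta_n^\phi$ has a further subsequence converging almost surely to $\theta_0$. Along it the supremum tends to $0$ almost surely, and this yields convergence in probability. The deterministic statement is equivalent to equicontinuity-type control, and the natural tool is a Pólya/Dini-type argument exploiting monotonicity of $qD$ in $p$.

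\textbf{Key structure.} For each $\theta$, write $qD(p;Q_\theta)=1-Q_\theta(p/2)/Q_\theta(1-p/2)$.
\begin{itemize}
\item The map $p\mapsto Q_\theta(p/2)$ is nondecreasing and $p\mapsto Q_\theta(1-p/2)$ is nonincreasing. Hence the ratio is nondecreasing, and $qD(\cdot;Q_\theta)$ is nonincreasing on $(0,1)$.
\item By (A1) the cdf is continuous and strictly increasing with $F_\theta(0)=0$, so $Q_\theta$ is continuous on $(0,1)$ with $Q_\theta(0+)=0$ and $Q_\theta(1/2)>0$.
\item Consequently $qD(\cdot;Q_{\theta_0})$ is continuous on $[0,1]$. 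At $p\to0$ the ratio tends to $0/Q_{\theta_0}(1^-)$, which is $0$ whether or not $Q_{\theta_0}(1^-)$ is finite, so $qD\to1$. At $p\to1$ the ratio tends to $Q(1/2)/Q(1/2)=1$, so $qD\to0$.
\item The values at the endpoints are fixed to $1$ and $0$ for all $\theta$, so the plug-in curves agree with the limit at $p\in\{0,1\}$.
\end{itemize}

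\textbf{Pólya argument.} We have monotone functions $f_n(p)=qD(p;Q_{\theta_n})$ converging pointwise on $[0,1]$ to a continuous monotone $f$; pointwise convergence comes from (A3), exactly as in Theorem \ref{thm:pointwise_convergence_qz}. Fix $\varepsilon>0$. Use uniform continuity of $f$ on $[0,1]$ to choose a grid $0=p_0<p_1<\dots<p_m=1$ with $f(p_{j-1})-f(p_j)<\varepsilon$. For $p\in[p_{j-1},p_j]$, monotonicity gives
\[
f_n(p_j)-f(p_{j-1})\le f_n(p)-f(p)\le f_n(p_{j-1})-f(p_j).
\]
Hence $\sup_p|f_n-f|\le \max_j|f_n(p_j)-f(p_j)|+\varepsilon$. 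The maximum is over finitely many points, so it tends to $0$ by pointwise convergence at the grid points. Letting $\varepsilon\downarrow0$ gives uniform convergence.

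\textbf{Main obstacle.} The delicate step is continuity of the limit curve at the endpoints $p=0$ and $p=1$. Without it the Pólya argument fails, since the plug-in curves are pinned at the endpoints. Continuity at $p=1$ is easy. At $p=0$ we must handle the case where $Q_{\theta_0}(1^-)=\infty$, i.e.\ unbounded support, which is the typical case. There the ratio $Q_{\theta_0}(p/2)/Q_{\theta_0}(1-p/2)$ still tends to $0$, because the numerator tends to $0$ (by $F_{\theta_0}(0)=0$ and continuity) while the denominator is bounded below by $Q_{\theta_0}(1/2)>0$. I would verify this carefully.

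This is also why the statement is made for $qD$ only. For $qZ$ the value $qZ(1)=1$ need not match the limit $1-Q(1/2)/Q(1^-)$, so uniform convergence could fail near $p=1$.
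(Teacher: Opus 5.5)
Your proof is correct and follows the same route as the paper. Consistency plus (A3) gives pointwise convergence, $qD(\cdot;Q_\theta)$ is monotone on $[0,1]$, and the limit curve is continuous, so a P\'olya-type argument yields uniform convergence in probability; the paper cites that last step as an external lemma, whereas you prove it directly (deterministic P\'olya plus the subsequence characterisation) and also check the endpoint continuity at $p=0,1$ that the paper glosses over.

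One aside: your closing explanation of why $qZ$ is treated separately is off target. In this model the support is unbounded ($Q_{\theta_0}(1^-)=\infty$), so $qZ(\cdot;Q_{\theta_0})$ is continuous at $p=1$. The actual obstruction, as the paper notes, is that $qZ(\cdot;Q_\theta)$ is not monotone in $p$, so the P\'olya step is unavailable.
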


\begin{proof}
    Since $Q_{\theta_0}$ is continuous, the function $qD(\cdot;Q_{\theta_0})$ is also continuous.
    Thus, the uniform convergence in probability of the sequence $\left(\widehat{qD}_{n}^{\phi}\right)$ to $qD$ follows from the fact that a~sequence of monotonic functions that converges in probability at every point on a~compact set to a~continuous function converges to it uniformly in probability (see Lemma 19 in \cite{Koriyama2026}).
\end{proof}    

For the proof of the uniform convergence in probability of the plug-in estimators of $qZ$, we need an additional assumption 
and the following lemma 
because the function $qZ$ is not monotone.

\begin{lemma}\label{lem:cont_fun}
    If $f$ is a~continuous, real function of $(x;\theta)$ on a~compact set $K\times\Theta$, where $K\subset\R$ and $\Theta\subset\R^s$ and $\hat{\theta}_{n}$ converges in probability to $\theta_0$, then for $f_n(x):=f(x;\hat{\theta}_n)$ and $f_0(x)=f(x;{\theta_0})$ we have
    \begin{equation*}
        \sup_{x\in K}
        \left|f_n(x) - f_0(x)\right|
        \xrightarrow{P} 0,
    \end{equation*}
       when $n$ tends to infinity.
\end{lemma}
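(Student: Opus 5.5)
The plan is to use uniform continuity of $f$ on the compact set $K\times\Theta$ to turn convergence in probability of $\hat{\theta}_n$ into uniform-in-$x$ convergence of $f_n$. The idea is that a small parameter error forces a small sup-norm error. Since $K\times\Theta\subset\R^{1+s}$ is compact and $f$ is continuous there, the Heine--Cantor theorem gives uniform continuity of $f$.

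Fix $\varepsilon>0$. Uniform continuity yields $\delta>0$ with the following property: whenever $(x,\theta),(x',\theta')\in K\times\Theta$ satisfy $|x-x'|+\|\theta-\theta'\|<\delta$, we have $|f(x;\theta)-f(x';\theta')|<\varepsilon$. Taking $x=x'$ gives a deterministic implication for every $\theta\in\Theta$:
\begin{equation*}
\|\theta-\theta_0\|<\delta \;\Longrightarrow\; \sup_{x\in K}|f(x;\theta)-f(x;\theta_0)|\le\varepsilon .
\end{equation*}
The point is that $\delta$ does not depend on $x$. Applying this with $\theta=\hat{\theta}_n$, which lies in $\Theta$ because it is defined as an argmin over $\Theta$, we get the event inclusion
\begin{equation*}
\Bigl\{\sup_{x\in K}|f_n(x)-f_0(x)|>\varepsilon\Bigr\}\subset\bigl\{\|\hat{\theta}_n-\theta_0\|\ge\delta\bigr\}.
\end{equation*}
The probability of the right-hand event tends to $0$ by the assumed convergence in probability. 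Since $\varepsilon$ was arbitrary, this gives the claim.

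The only point needing care is measurability of $\sup_{x\in K}|f_n(x)-f_0(x)|$. The map $x\mapsto f(x;\theta)$ is continuous on $K$, so the supremum can be taken over a countable dense subset of $K$, and it is therefore a measurable function of $\hat{\theta}_n$. Alternatively one can phrase the conclusion with outer probability. There is no real obstacle beyond this; the essential ingredient is the joint compactness of $K\times\Theta$, which is why assumption (A2) is needed.
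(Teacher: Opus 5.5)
Your proof is correct and follows the paper's argument. Uniform continuity of $f$ on the compact set $K\times\Theta$ gives a $\delta$ independent of $x$, which yields the inclusion $\{\sup_{x\in K}|f_n(x)-f_0(x)|>\varepsilon\}\subset\{\|\hat{\theta}_n-\theta_0\|\ge\delta\}$ and hence vanishing probability; the paper packages this same step through the modulus $m(\delta)$ and the bound $\sup_{x\in K}|f_n(x)-f_0(x)|\le m(\|\hat{\theta}_n-\theta_0\|)$, and your remark on measurability of the supremum is a detail the paper leaves implicit.
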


\begin{proof}
    We define a~nondecreasing deterministic function of $\delta$
    \begin{equation*}
      m(\delta) \;:=\; \sup\Bigl\{\,
  \bigl|f(x;{\theta_1})-f(x;{\theta_2})\bigr|\colon\;
  x\in K,\ \theta_1,\theta_2\in\Theta,\ ||\theta_1-\theta_2||\le\delta
  \,\Bigr\}.
    \end{equation*}
    The function $f$ is continuous on a~compact set $K\times\Theta$, thus it is uniformly continuous. Hence, for all $\varepsilon>0$ there exists $\delta_{\varepsilon}>0$ such that
    \begin{equation*}
        |x_1-x_2| + ||\theta_1-\theta_2|| \leq \delta_{\varepsilon} \Rightarrow 
        |f(x_1;{\theta_1}) - f(x_2;Q_{\theta_2})| \leq {\varepsilon}.
    \end{equation*}
    Thus, taking $x:=x_1=x_2$, we get
    \begin{equation*}
        ||\theta_1-\theta_2|| \leq \delta_{\varepsilon} \Rightarrow 
        |f(x;{\theta_1}) - f(x;{\theta_2})| \leq {\varepsilon},
    \end{equation*}
    for every $x\in K$. Hence $m(\delta) \leq {\varepsilon}$ for every $\delta \leq \delta_{\varepsilon}$. As $\varepsilon$ was arbitrary, we have $m(\delta)\to0$ when $\delta\to0$.

    Now fix $n$ and an elementary event $\omega$ in the probability space $\Omega$.
    We can see that for every $\omega$ we have
    \begin{equation}\label{eq:def_sn}
        S_n(\omega) := \sup_{x\in K} 
        \left|
        f(x;{\hat{\theta}_{n}})-
        f(x;{\theta_0})
        \right| \leq m\left(\left|\left|{\hat{\theta}_{n}}-\theta_0\right|\right|\right).
    \end{equation}
    Let $\eta>0$. We know that $m(\delta)\xrightarrow[]{\delta\to0} 0$, so we can fix $\delta_{\eta} > 0$ such that $m(\delta_{\eta})<\eta$.
For $\omega\in\left\{\left|\left|{\hat{\theta}_{n}}-\theta_0\right|\right|\leq\delta_{\eta}\right\}$ the monotonicity of $m$ and \eqref{eq:def_sn} give
    \begin{equation*}
        S_n(\omega) \leq 
        m\left(\left|\left|{\hat{\theta}_{n}}-\theta_0\right|\right|\right) \leq m(\delta_{\eta})  < \eta,
    \end{equation*}
    thus $\left\{\left|\left|{\hat{\theta}_{n}}-\theta_0\right|\right|\leq\delta_{\eta}\right\} \subseteq \{S_n\leq\eta\}$,
    hence $ \{S_n>\eta\}  \subseteq  \left\{\left|\left|{\hat{\theta}_{n}}-\theta_0\right|\right|>\delta_{\eta}\right\}$,
    therefore
    \begin{equation}\label{eq:proof_3_7_end}
        P(S_n>\eta) \leq P\left(\left|\left|{\hat{\theta}_{n}}-\theta_0\right|\right|>\delta_{\eta}\right).
    \end{equation}
 The RHS of \eqref{eq:proof_3_7_end} converges to 0 since $\hat{\theta}_{n}$ converges in probability to $\theta_0$, thus $S_n \xrightarrow[n\to\infty]{P}0$, which ends the proof.
\end{proof}

\begin{theorem}\label{thm:qz_conv_eta}
    Let $\hat{\theta}_{n}^{\phi}$ be a~BAN estimator of the parameter $\theta$ in the model described in Section \ref{sec:model}.
    If $qZ$ is a~continuous function of $(p,\theta)$, then
    \begin{equation*}
        \sup_{p\in[0,1]}
        \left|\widehat{qZ}^{\phi}_{n}(p) - qZ(p;Q_{\theta_0})\right|
        \xrightarrow{P} 0,
    \end{equation*}
       when $n$ tends to infinity.
    \end{theorem}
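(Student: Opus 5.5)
The plan is to reduce the statement directly to Lemma \ref{lem:cont_fun}, taking $K=[0,1]$ and $f(p;\theta):=qZ(p;Q_\theta)$ on the compact set $[0,1]\times\Theta$. Compactness holds because $[0,1]$ is compact and $\Theta$ is compact by assumption (A2). Continuity of $f$ jointly in $(p,\theta)$ on $[0,1]\times\Theta$ is exactly the additional hypothesis of the theorem. This includes the endpoints, where $qZ$ is defined to equal $1$.

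The only remaining input is that $\hat{\theta}_{n}^{\phi}\xrightarrow{P}\theta_0$. This follows as in the proof of Theorem \ref{thm:pointwise_convergence_qz}. A BAN estimator satisfies $\sqrt{n}(\hat{\theta}_{n}^{\phi}-\theta_0)\xrightarrow{d}\mathcal N(\mathbf 0,I^{-1}_{\theta_0})$. Hence $\sqrt{n}(\hat{\theta}_{n}^{\phi}-\theta_0)=O_P(1)$, and multiplying by $n^{-1/2}\to0$ gives $\hat{\theta}_{n}^{\phi}-\theta_0=o_P(1)$ by Slutsky's lemma. We also need $\hat{\theta}_{n}^{\phi}\in\Theta$. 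This holds because the estimator is defined in \eqref{eq:MDestDef} as an argmin over $\Theta$.

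With $f_n(p)=qZ(p;\widehat{Q}^{\phi}_n)=\widehat{qZ}^{\phi}_{n}(p)$ and $f_0(p)=qZ(p;Q_{\theta_0})$, Lemma \ref{lem:cont_fun} yields $\sup_{p\in[0,1]}|f_n(p)-f_0(p)|\xrightarrow{P}0$, which is the claim. A minor measurability point is that the supremum is a random variable. I would handle this by noting that continuity in $p$ lets the supremum be taken over rational $p$, or else read the convergence in outer probability.

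I do not expect a real obstacle, since all the difficulty sits in the continuity hypothesis. Without it, continuity at $p\in\{0,1\}$ could fail: for example, $Q_\theta(p/2)/Q_\theta((1+p)/2)\to0$ as $p\to0$ requires $Q_\theta(0^+)=0$, which holds by (A1), but uniformity in $\theta$ near the endpoints would otherwise need to be checked. The theorem assumes this away.
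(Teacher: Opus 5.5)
Your proposal is correct and follows the paper's proof: the paper also gets consistency of $\hat{\theta}_{n}^{\phi}$ from the BAN property and then applies Lemma \ref{lem:cont_fun} with $K=[0,1]$ and $f(p;\theta)=qZ(p;Q_\theta)$ on the compact set $[0,1]\times\Theta$. Your additional remarks (Slutsky for consistency, $\hat{\theta}_{n}^{\phi}\in\Theta$ via the argmin definition, and measurability of the supremum) are sound details that the paper leaves implicit.
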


\begin{proof}
    The estimator $\hat{\theta}_{n}^{\phi}$ is BAN, hence it converges in probability to $\theta_0$.
    Since we assumed that $qZ$ is a~continuous function of $(p,\theta)$, by Lemma \ref{lem:cont_fun} we get the thesis.
\end{proof}

The following theorem establishes asymptotic normality and asymptotic efficiency of estimators defined in Section~\ref{sec:methodology}

\begin{theorem}\label{thm:qzi_qdi_as_norm}
 Let $\hat{\theta}_{n}^{\phi}$ be a~BAN estimator of the parameter $\theta$ in the model described in Section~\ref{sec:model} and 
$v_1(\theta):=qZI(Q_{\theta})$, $v_2(\theta):=qDI(Q_{\theta})$.
 If $v_{i}$ is differentiable at $\theta_{0}$ and $\nabla v_i (\theta_0) \neq {\bf 0}$, where ${\bf 0}$ is a~vector of zeroes,
 for $i=1,2,$ 
 then, the estimators $\widehat{qZI}^{\phi}_{n}=v_{1}\left({\hat{\theta}_{n}^{\phi}}\right)$ and 
$\widehat{qDI}^{\phi}_{n}=v_{2}\left({\hat{\theta}_{n}^{\phi}}\right)$ are asymptotically normal estimators of $qZI\left(Q_{\theta_0}\right)$ and $qDI\left(Q_{\theta_0}\right)$, respectively, namely
  \begin{equation}
     \sqrt{n} \left(v_{i}\left(\hat{\theta}_n^\phi\right) - v_{i}(\theta_0) \right) \xrightarrow[]{d}
     \mathcal{N}\left(0, 
     \nabla v_i (\theta_0) 
     I^{-1}_{\theta_0}
     (\nabla v_i (\theta_0) )^{\top}
     \right),
 \end{equation}
 for $i=1,2$.
\end{theorem}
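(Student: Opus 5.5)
The natural route is the delta method applied to the BAN property. By assumption, $\hat{\theta}_{n}^{\phi}$ is a~BAN estimator. In particular, Theorem~\ref{thm:phi_divergence_ban} gives
\[
\sqrt{n}\bigl(\hat{\theta}_n^\phi-\theta_0\bigr)\xrightarrow{d}\mathcal{N}\bigl({\bf 0},I^{-1}_{\theta_0}\bigr).
\]
Since each $v_i\colon\Theta\to\R$ is differentiable at the interior point $\theta_0$ (interiority comes from assumption (A2)), the delta method (Theorem 3.1 in \cite{VanDerVaart1998}) yields
\[
\sqrt{n}\bigl(v_i(\hat{\theta}_n^\phi)-v_i(\theta_0)\bigr)=\nabla v_i(\theta_0)\sqrt{n}\bigl(\hat{\theta}_n^\phi-\theta_0\bigr)+o_P(1).
\]

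To justify this, I will write out the first-order expansion $v_i(\theta)=v_i(\theta_0)+\nabla v_i(\theta_0)(\theta-\theta_0)+r(\theta)$, where $r(\theta)=o(\|\theta-\theta_0\|)$. I then need $\sqrt{n}\,r(\hat{\theta}_n^\phi)\xrightarrow{P}0$, and two facts give this. First, $\hat{\theta}_n^\phi\xrightarrow{P}\theta_0$. Second, $\sqrt{n}\|\hat{\theta}_n^\phi-\theta_0\|=O_P(1)$, which holds by tightness of a~weakly convergent sequence. Combining these through the standard lemma that $o(\|h\|)$ evaluated at an $O_P(n^{-1/2})$ argument is $o_P(n^{-1/2})$ (Lemma 2.12 in \cite{VanDerVaart1998}) finishes this step.

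Next, I will use that the linear map $h\mapsto\nabla v_i(\theta_0)h$ is continuous. By the Continuous Mapping Theorem, $\nabla v_i(\theta_0)\sqrt{n}(\hat{\theta}_n^\phi-\theta_0)$ converges in distribution to $\nabla v_i(\theta_0)Z$, where $Z\sim\mathcal{N}({\bf 0},I^{-1}_{\theta_0})$. This limit is a~univariate normal with mean $0$ and variance $\nabla v_i(\theta_0)I^{-1}_{\theta_0}(\nabla v_i(\theta_0))^{\top}$. Slutsky's lemma then absorbs the $o_P(1)$ remainder, which gives the claimed limit for $i=1,2$.

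The one subtle point is the role of the hypothesis $\nabla v_i(\theta_0)\neq{\bf 0}$. The Fisher information of the multinomial model is $I_{\theta_0}=J^{\top}\mathrm{diag}(g_0)^{-1}J$, where $J=\partial g(\theta_0)/\partial\theta$. This matrix is positive definite because $J$ has full rank by (B3) and $g_0>0$ by (B1). So $I^{-1}_{\theta_0}$ exists and is positive definite, and the nonvanishing gradient makes the limiting variance strictly positive, so the limit is a~genuine, non-degenerate normal distribution. No interchange of differentiation and integration in $v_i(\theta)=\int_0^1 qZ(p;Q_\theta)\,dp$ is needed, because differentiability of $v_i$ is assumed directly. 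The main obstacle is therefore only careful bookkeeping of the remainder term, together with noting that, since the asymptotic variance equals the Cramér--Rao bound for the transformed parameter, the estimators are also asymptotically efficient.
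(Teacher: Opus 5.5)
Your proposal is correct and takes the same approach as the paper, whose proof simply cites the asymptotic normality of $\hat{\theta}_n^\phi$ and the delta method (Theorem 3.1 in \cite{VanDerVaart1998}). You have only written out the delta method's internal steps and added a correct check that the limit is non-degenerate, since $I_{\theta_0}=J^{\top}\mathrm{diag}(g_0)^{-1}J$ is positive definite under (B1) and (B3).
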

\begin{proof}
The theorem follows from the asymptotic normality of $\hat{\theta}_{n}^{\phi}$ and the delta method (see Theorem 3.1 in \cite{VanDerVaart1998}).
\end{proof}

\begin{remark}
    Theorem \ref{thm:qzi_qdi_as_norm} allows us to construct asymptotically pointwise confidence intervals for $qZI$ and $qDI$.
\end{remark}

\section{Simulation study}\label{sec:simulation_study}

The main goal of this study is to compare the estimators of quantile inequality curves and measures obtained using the MD method with various divergences.
Since we focus here on using these inequality curves and measures on grouped income data, in our simulation study we choose probability distributions that are often used to fit income data.
Thus, we consider distributions which are special cases of the generalised beta distribution of the second kind (GB2), introduced by McDonald \cite{McDonald1984}. The GB2 distribution can capture a~wide range of distributional shapes, including heavy tails and skewness. 
For this reason, it has become a~popular choice for modelling income and has many other applications in actuarial science and finance (see, for example, \cite{McDonald1995} or \cite{Kleiber2003}).
The probability density function (pdf) of GB2 is given by
\begin{equation*}\label{e:fGB2}
f(x; a, b, r, q) = \frac{a x^{a r - 1}}{b^{a r} B(r, q) \left[1 + \left(\frac{x}{b}\right)^a\right]^{r + q}}, \quad x > 0,
\end{equation*}
where  $B(r, q)$ is the Beta function, $a, r, q > 0$ are the shape parameters and $b>0$ is the scale parameter.
In the problem of income analysis, one of the most popular distributions is the Dagum distribution
$\mathcal{D}(a, r, b)$ with the pdf
\begin{equation*}\label{e:fDagum}
{\displaystyle f(x;a,r,b)={\frac {ar\left({\tfrac {x}{b}}\right)^{ar}}{x\left[1+\left({\tfrac {x}{b}}\right)^{a}\right]^{r+1}}},}
\end{equation*}
which is a~special case of GB2 with $q=1$.

The special and limiting cases of GB2 with two parameters include Pareto, Weibull, gamma, and lognormal distributions, among others (see Figure 2 in \cite{McDonald1995}). 
In the simulations, in addition to the Dagum distribution, we also consider the Weibull distribution, which is given by the probability density function
\begin{equation*}
{\displaystyle f(x;\sigma,\beta )=\frac{\beta}{\sigma}\left({\frac{x}{\sigma}}\right)^{\beta-1}\exp\left[{-\left(\frac{x}{\sigma}\right)^{\beta}}\right]},
\end{equation*}
where $x>0$, $\beta>0$, and $\sigma>0$.

The values of the estimators $\hat{\theta}^\phi_n$ of the parameter $\theta$ were computed using an implementation of the \textit{L-BFGS-B} algorithm \cite{Byrd1995} in the \textit{minimize} function of the \textit{scipy.optimize} module in Python.

In this section, the abbreviations HD, JSD, etc. refer to the estimators of the parameters of a~certain distribution obtained by minimising the corresponding divergence statistic between the empirical probabilities and the parametric family of distribution functions.
In tables and figures addressing the problem of estimating the quantile inequality curves and measures, these abbreviations stand for the corresponding plug-in estimators of $qZ$, $qD$, $qZI$ and $qDI$.
We consider the power divergence with $\lambda=\frac{2}{3}$, i.e. $PD^{\left(\frac 23\right)}$, since this value was recommended by Cressie and Read (see Chapter 6 in \cite{Read1988}) due to its good performance in goodness-of-fit tests or estimation of the vector of probabilities, compared to other choices of $\lambda$.

\begin{table}[h]
\caption{Edges of the groups used for grouping the observations drawn from the Weibull distribution in the simulation study}
\label{tab:bin_edges_weibull}
\begin{tabular}{l|rrrrrr}
\toprule
$c_j \backslash \beta$ & 0.5 & 0.8 & 1.0 & 1.2 & 1.6 & 2.0 \\
\midrule
$c_1$ & 0.1 & 0.2 & 1.0 & 1.0 & 1.0 & 1.5 \\
$c_2$ & 0.5 & 0.5 & 1.5 & 1.5 & 1.5 & 2.5 \\
$c_3$ & 1.2 & 1.2 & 2.5 & 2.5 & 2.5 & 3.5 \\
$c_4$ & 2.5 & 2.5 & 3.5 & 3.5 & 3.5 & 4.0 \\
$c_5$ & 5.0 & 3.5 & 5.0 & 5.0 & 4.0 & 5.0 \\
$c_6$ & 7.5 & 5.0 & 7.5 & 6.5 & 5.0 & 5.5 \\
$c_7$ & 10.0 & 7.5 & 10.0 & 9.0 & 6.0 & 6.0 \\
$c_8$ & 20.0 & 12.5 & 15.0 & 11.0 & 8.0 & 7.0 \\
$c_9$ & 50.0 & 20.0 & 20.0 & 15.0 & 10.0 & 8.5 \\
\bottomrule
\end{tabular}
\end{table}

\begin{table}[h]
\caption{True probabilities that an observation drawn from the Weibull distribution with a~shape parameter $\beta$ and scale parameter $\sigma=5$ falls into $i$-th group in the simulation study}
\centering
\begin{tabular}{c|cccccc}
\toprule
$i \backslash \beta$ & 0.5 & 0.8 & 1.0 & 1.2 & 1.6 & 2.0 \\
\midrule
1 & 0.132 & 0.073 & 0.181 & 0.135 & 0.073 & 0.086 \\
2 & 0.139 & 0.073 & 0.078 & 0.075 & 0.062 & 0.135 \\
3 & 0.122 & 0.134 & 0.134 & 0.143 & 0.145 & 0.166 \\
4 & 0.113 & 0.156 & 0.110 & 0.126 & 0.151 & 0.085 \\
5 & 0.125 & 0.092 & 0.129 & 0.153 & 0.072 & 0.159 \\
6 & 0.074 & 0.104 & 0.145 & 0.114 & 0.129 & 0.070 \\
7 & 0.051 & 0.117 & 0.088 & 0.122 & 0.106 & 0.061 \\
8 & 0.108 & 0.126 & 0.086 & 0.056 & 0.142 & 0.096 \\
9 & 0.093 & 0.077 & 0.031 & 0.052 & 0.072 & 0.085 \\
10& 0.042 & 0.048 & 0.018 & 0.024 & 0.048 & 0.056 \\
\bottomrule
\end{tabular}
\label{tab:prob_weibull_groups}
\end{table}

\subsection{Estimation of $qZI$ and $qDI$ for samples from the Weibull distribution}\label{sec:qzi_est_weibull}

The experiment is conducted as follows.
We draw a~sample $\mathbf{x}$ of size $n=100$ from the Weibull distribution. 
The sample is then grouped into $k=10$ groups with edges defined as in Table \ref{tab:bin_edges_weibull}, which shows the vectors of the edges $(c_1, c_2, \ldots, c_9)$ of the groups used in the simulation study when the observations were sampled from the Weibull distribution for each value of the shape parameter $\beta$.
Then we try to fit the Weibull distribution to the observed frequencies of the groups using the statistics described in the previous sections.
We repeat this for $M=1000$ samples. 
For each sample, $\hat{\theta}^\phi_n$ is computed for various $\phi$, namely HD, JSD, KLD, PD with $\lambda=\frac{2}{3}$ and $\chi^2_P$, as defined in Section \ref{sec:methodology}.
Moreover, $qZI$ and $qDI$ are estimated using the plug-in estimator obtained by inserting $\hat{\theta}^\phi_n$ into the formula for $qZI$ and $qDI$, respectively.
Similarly, the plug-in estimator of $qDI$ is constructed.
In Table \ref{tab:prob_weibull_groups} we can see the true probabilities of falling into each bin for six different values of the shape parameter $\beta$ that vary between $0.5$ and $2$.
Throughout this simulation study, all samples drawn from the Weibull distribution are drawn from the Weibull distribution with scale parameter $\sigma=5$

Table \ref{tab:beta_mse_weibull_weibull} shows the MSEs of the estimators of the shape parameter $\beta$.
In Tables \ref{tab:qzi_mse_weibull_weibull} and \ref{tab:qdi_mse_weibull_weibull} we can see the MSEs of the plug-in estimators of $qZI$ and $qDI$, respectively, for samples drawn from the Weibull distribution, assuming the Weibull distribution.
The $\chi^2_P$ divergence gave the lowest MSE of the shape parameter estimators and both inequality measures for most of the shape parameter values, followed by the power divergence with $\lambda=\frac{2}{3}$.

\begin{table}[h]
\caption{MSE (multiplied by 1000) of the estimators of the shape parameter of the Weibull distribution with shape parameter $\beta$ based on samples of size $n=100$ drawn from the Weibull distribution divided into 10 groups. PD was used with $\lambda=\frac{2}{3}$.}
\centering
\begin{tabular}{l|rrrrr}
\toprule
$\beta$ & HD & JSD & KLD & PD & $\chi^2_P$ \\
\midrule
0.5 & 2.743 & 2.717 & 2.640 & \textbf{2.600} & 2.603 \\
0.8 & 5.666 & 5.605 & 5.244 & 4.950 & \textbf{4.866} \\
1.0 & 10.262 & 9.702 & 8.688 & 8.035 & \textbf{7.882} \\
1.2 & 12.952 & 12.539 & 11.479 & 10.712 & \textbf{10.513} \\
1.6 & 22.855 & 22.513 & 21.007 & 19.767 & \textbf{19.415} \\
2.0 & 35.244 & 35.596 & 33.728 & 32.976 & \textbf{32.914} \\
\bottomrule
\end{tabular}
\label{tab:beta_mse_weibull_weibull}
\end{table}

\begin{table}[h]
\caption{MSE (multiplied by 1000) of estimates of $qZI$ based on estimators of shape parameter of the Weibull distribution with shape parameter $\beta$ based on samples of size $n=100$ drawn from the Weibull distribution divided into 10 groups. PD was used with $\lambda=\frac{2}{3}$.}
\centering
\begin{tabular}{l|l|rrrrr}
\toprule
$qZI$ & $\beta$ & HD & JSD & KLD & PD & $\chi^2_P$ \\
\midrule
0.968 & 0.5 & 0.109 & 0.108 & 0.106 & \textbf{0.105} & 0.106 \\
0.890 & 0.8 & 0.465 & 0.460 & 0.430 & 0.406 & \textbf{0.399} \\
0.833 & 1.0 & 0.802 & 0.760 & 0.684 & 0.635 & \textbf{0.623} \\
0.777 & 1.2 & 0.876 & 0.850 & 0.782 & 0.731 & \textbf{0.717} \\
0.681 & 1.6 & 1.042 & 1.029 & 0.965 & 0.911 & \textbf{0.895} \\
0.602 & 2.0 & 1.066 & 1.071 & 1.010 & 0.974 & \textbf{0.965} \\
\bottomrule
\end{tabular}
\label{tab:qzi_mse_weibull_weibull}
\end{table}

\begin{table}[h]
\caption{MSE (multiplied by 1000) of estimates of $qDI$ based on estimators of shape parameter of the Weibull distribution with shape parameter $\beta$ based on samples of size $n=100$ drawn from the Weibull distribution divided into 10 groups. PD was used with $\lambda=\frac{2}{3}$.}
\centering
\begin{tabular}{l|l|rrrrr}
\toprule
$qDI$  & $\beta$ & HD & JSD & KLD & PD & $\chi^2_P$ \\
\midrule
0.835 & 0.5 & 0.253 & 0.251 & 0.244 & 0.240 & \textbf{0.239} \\
0.751 & 0.8 & 0.371 & 0.368 & 0.345 & 0.326 & \textbf{0.320} \\
0.702 & 1.0 & 0.526 & 0.500 & 0.451 & 0.419 & \textbf{0.412} \\
0.658 & 1.2 & 0.533 & 0.517 & 0.476 & 0.445 & \textbf{0.437} \\
0.583 & 1.6 & 0.611 & 0.603 & 0.565 & 0.534 & \textbf{0.524} \\
0.523 & 2.0 & 0.635 & 0.638 & 0.602 & 0.581 & \textbf{0.576} \\
\bottomrule
\end{tabular}
\label{tab:qdi_mse_weibull_weibull}
\end{table}

\begin{figure}
    \centering
    \includegraphics[width=1.0\linewidth]{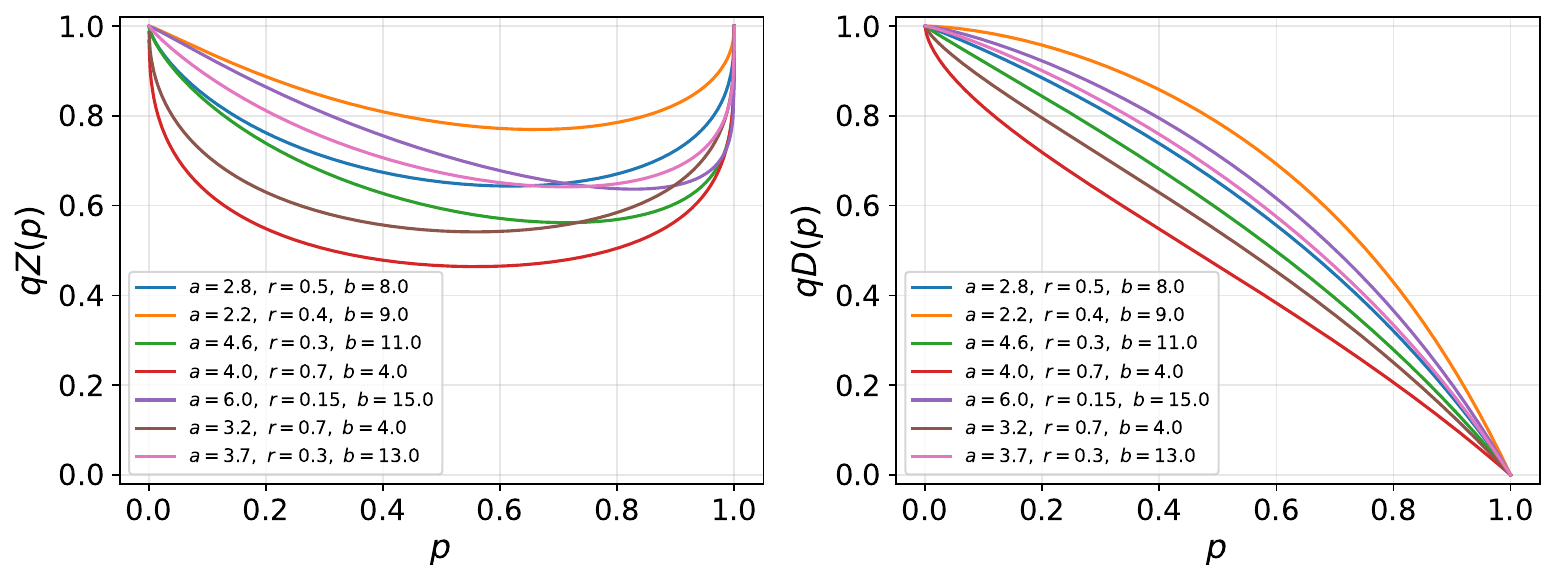}
    \caption{Example of $qZ$ and $qD$ curves for random variables from the Dagum distribution with various sets of parameters considered in the simulation study. The shape of both curves depends only on shape parameters.}
    \label{fig:qZ_qD_Dagum_example}
\end{figure}

\subsection{Estimation of $qZI$ and $qDI$ for samples from the Dagum distribution} \label{sec:qzi_est_dagum}

\begin{table}
\caption{Edges of the groups used for grouping the observations drawn from the Dagum distribution in the simulation study}
\label{tab:bin_edges_dagum}
\begin{tabular}{l|rrrrrrr}
\toprule
$a$ & 2.8 & 2.2 & 4.6 & 4.0 & 6.0 & 3.2 & 3.7 \\
$r$ & 0.5 & 0.4 & 0.3 & 0.7 & 0.15 & 0.7 & 0.3 \\
$b$  & 8.0 & 9.0 & 11.0 & 4.0 & 15.0 & 4.0 & 13.0 \\
\midrule
$c_1$ & 1.0 & 1.0 & 1.0 & 1.5 & 1.0 & 1.5 & 1.0 \\
$c_2$ & 1.5 & 1.5 & 1.5 & 2.0 & 1.5 & 2.0 & 1.5 \\
$c_3$ & 2.5 & 2.5 & 2.5 & 2.5 & 2.5 & 2.5 & 2.5 \\
$c_4$ & 3.5 & 3.5 & 3.5 & 3.0 & 3.5 & 3.0 & 3.5 \\
$c_5$ & 5.0 & 5.0 & 5.0 & 3.5 & 5.0 & 3.5 & 5.0 \\
$c_6$ & 7.5 & 7.5 & 7.5 & 5.0 & 7.5 & 5.0 & 7.5 \\
$c_7$ & 10.0 & 10.0 & 10.0 & 6.0 & 10.0 & 6.0 & 10.0 \\
$c_8$ & 15.0 & 15.0 & 15.0 & 7.0 & 15.0 & 7.0 & 15.0 \\
$c_9$ & 20.0 & 20.0 & 20.0 & 8.0 & 20.0 & 8.0 & 20.0 \\
\bottomrule
\end{tabular}
\end{table}

\begin{table}
\caption{True probabilities that an observation drawn from the Dagum distribution with parameters $(a,r,b)$ falls into $i$-th group in the simulation study}
\label{tab:bin_probs_dagum}
\begin{tabular}{l|rrrrrrr}
\toprule
$a$ & 2.8 & 2.2 & 4.6 & 4.0 & 6.0 & 3.2 & 3.7 \\
$r$ & 0.5 & 0.4 & 0.3 & 0.7 & 0.15 & 0.7 & 0.3 \\
$b$  & 8.0 & 9.0 & 11.0 & 4.0 & 15.0 & 4.0 & 13.0 \\
\midrule
$1$ & 0.054 & 0.144 & 0.037 & 0.063 & 0.087 & 0.108 & 0.058 \\
$2$ & 0.041 & 0.061 & 0.027 & 0.074 & 0.038 & 0.089 & 0.033 \\
$3$ & 0.097 & 0.111 & 0.065 & 0.105 & 0.073 & 0.106 & 0.069 \\
$4$ & 0.107 & 0.099 & 0.076 & 0.126 & 0.071 & 0.112 & 0.072 \\
$5$ & 0.160 & 0.126 & 0.129 & 0.130 & 0.102 & 0.107 & 0.111 \\
$6$ & 0.215 & 0.153 & 0.228 & 0.288 & 0.163 & 0.235 & 0.180 \\
$7$ & 0.133 & 0.098 & 0.193 & 0.095 & 0.151 & 0.088 & 0.155 \\
$8$ & 0.117 & 0.102 & 0.182 & 0.050 & 0.216 & 0.053 & 0.192 \\
$9$ & 0.040 & 0.045 & 0.044 & 0.027 & 0.074 & 0.033 & 0.076 \\
${10}$ & 0.036 & 0.062 & 0.018 & 0.042 & 0.024 & 0.070 & 0.054 \\
\bottomrule
\end{tabular}
\end{table}

Similarly as in Section \ref{sec:qzi_est_weibull}, we compare the estimators of $qZI$ and $qDI$ based on various statistics based on samples of size $n=100$ drawn from the Dagum distribution.
These samples are grouped into 10 groups with edges defined as in Table \ref{tab:bin_edges_dagum}, which shows the edge vectors $(c_1, c_2, \ldots, c_9)$ that define the groups used in the simulation study for various combinations of parameters $(a,r,b)$ of the Dagum distribution.
The choice of the sets of parameters was inspired by the estimates of the parameters of the Dagum distribution fitted to income data (gathered in various countries) by Bandourian et al. \cite{Bandourian2003}.
Table \ref{tab:bin_probs_dagum} shows the true probabilities of falling into each bin for all the sets of parameters considered in the simulation study with samples from the Dagum distribution.

Figure \ref{fig:qZ_qD_Dagum_example} shows the true curves $qZ$ and $qD$ for the Dagum distribution with the chosen sets of parameters.
Tables \ref{tab:qzi_mse_dagum_dagum} and \ref{tab:qdi_mse_dagum_dagum} show the MSE of the plug-in estimators of $qZI$ and $qDI$, respectively, for samples drawn with different sets of parameters of the Dagum distribution, when the Dagum distribution is assumed.
As in the Weibull distribution case, $\chi^2_P$ followed by PD with $\lambda=\frac{2}{3}$ led to estimators of $qZI$ and $qDI$ with lowest MSE in almost all sets of parameters considered from the Dagum distribution.
The analysis of boxplots (see Figure \ref{fig:dagum_dagum_46_qzi}) confirms that PD with $\lambda=\frac{2}{3}$ and $\chi^2_P$ are a~slightly more safe choice in terms of outlying observations and lead to estimates less distant from the true value of the indices $qZI$ than the other estimators.
Boxplots with further examples are given in the GitHub repository mentioned in Section \ref{sec:data_and_code}.

\begin{figure}
    \centering
    \includegraphics[width=1.0\linewidth]{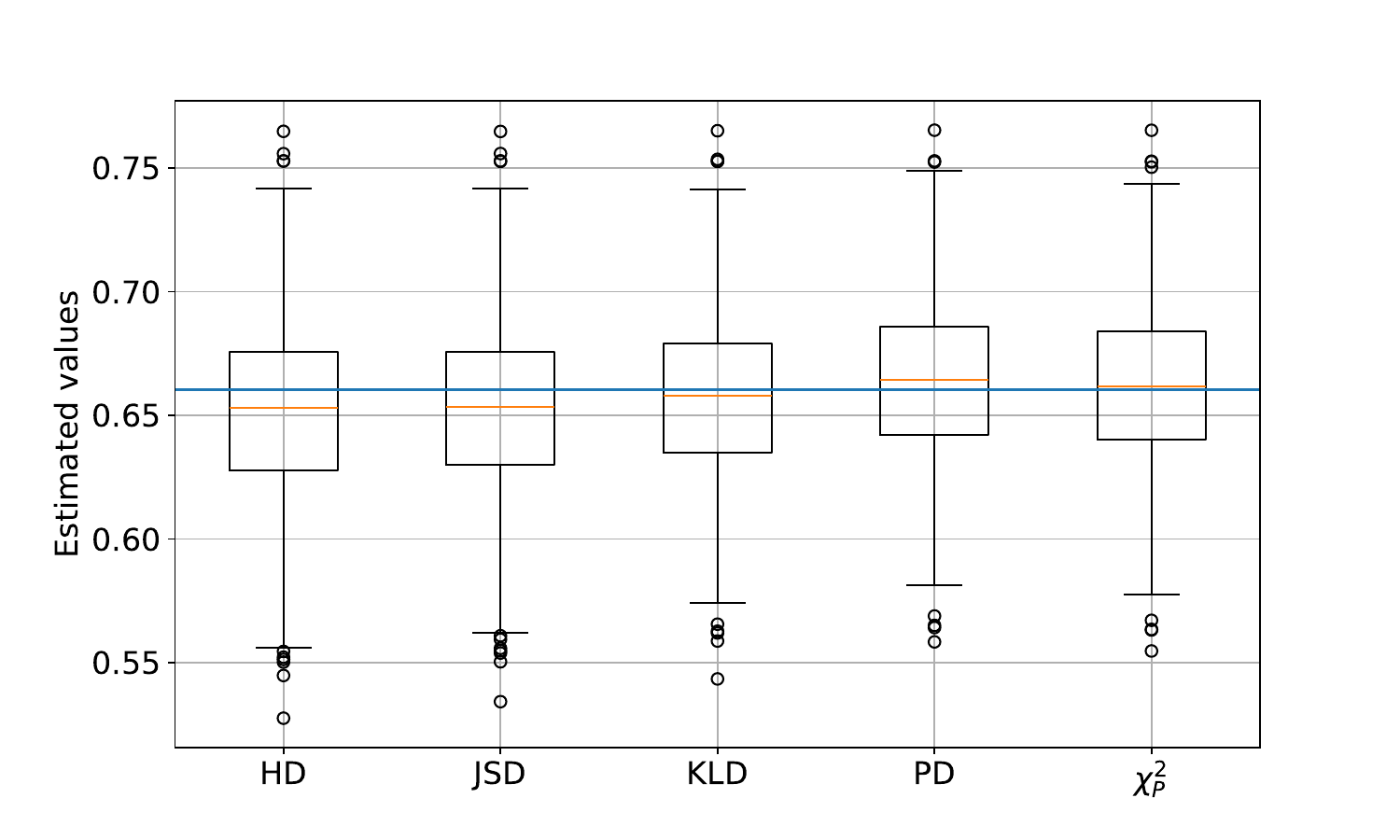}
    \caption{A comparison of the estimated values of $qZI$ for 1000 samples of size 100 drawn from the Dagum distribution $\mathcal{D}(4.6, 0.3, 11.0)$ and divided into 10 groups. Estimates obtained with different estimators assuming the Dagum distribution. The blue horizontal line represents the true value of the estimated index $qZI$.}
    \label{fig:dagum_dagum_46_qzi}
\end{figure}

\begin{table}
\caption{MSE (multiplied by 1000) of estimates of $qZI$ based on estimators of the parameters of the Dagum distribution based on samples of size $n=100$ drawn from the Dagum distribution with parameters $\theta=(a,r,b)$ divided into 10 groups. PD was used with $\lambda=\frac{2}{3}$.}
\label{tab:qzi_mse_dagum_dagum}
\begin{tabular}{l|c|rrrrr}
\toprule
$qZI$ & $(a,r,b)$ & HD & JSD & KLD & PD & $\chi^2_P$ \\
\midrule
0.722 & (2.8, 0.5, 8.0) & 1.100 & 1.076 & 0.984 & 0.914 & \textbf{0.895} \\
0.837 & (2.2, 0.4, 9.0) & 0.823 & 0.818 & 0.791 & 0.768 & \textbf{0.762} \\
0.660 & (4.6, 0.3, 11.0) & 1.423 & 1.341 & 1.163 & 1.070 & \textbf{1.066} \\
0.539 & (4.0, 0.7, 4.0) & 1.094 & 1.073 & 0.991 & 0.922 & \textbf{0.899} \\
0.753 & (6.0, 0.15, 15.0) & 1.286 & 1.276 & 1.172 & 1.095 & \textbf{1.070} \\
0.618 & (3.2, 0.7, 4.0) & 1.230 & 1.218 & 1.172 & 1.127 & \textbf{1.110} \\
0.734 & (3.7, 0.3, 13.0) & 1.187 & 1.168 & 1.055 & 0.988 & \textbf{0.969} \\
\bottomrule
\end{tabular}
\end{table}

\begin{table}
\caption{MSE (multiplied by 1000) of estimates of $qDI$ based on estimators of the parameters of the Dagum distribution based on samples of size $n=100$ drawn from the Dagum distribution with parameters $\theta=(a,r,b)$ divided into 10 groups. PD was used with $\lambda=\frac{2}{3}$.}
\label{tab:qdi_mse_dagum_dagum}
\begin{tabular}{l|c|rrrrr}
\toprule
$qDI$ & $(a,r,b)$ & HD & JSD & KLD & PD & $\chi^2_P$ \\
\midrule
0.605 & (2.8, 0.5, 8.0) & 0.645 & 0.633 & 0.585 & 0.548 & \textbf{0.538} \\
0.697 & (2.2, 0.4, 9.0) & 0.624 & 0.621 & 0.596 & 0.574 & \textbf{0.567} \\
0.563 & (4.6, 0.3, 11.0) & 0.896 & 0.848 & 0.744 & 0.689 & \textbf{0.687} \\
0.465 & (4.0, 0.7, 4.0) & 0.645 & 0.633 & 0.586 & 0.547 & \textbf{0.534} \\
0.647 & (6.0, 0.15, 15.0) & 0.941 & 0.940 & 0.866 & 0.815 & \textbf{0.802} \\
0.525 & (3.2, 0.7, 4.0) & 0.711 & 0.705 & 0.678 & 0.651 & \textbf{0.641} \\
0.619 & (3.7, 0.3, 13.0) & 0.776 & 0.765 & 0.696 & 0.658 & \textbf{0.648} \\
\bottomrule
\end{tabular}
\end{table}

\subsection{Estimation of quantile inequality curves}

We computed the mean integrated squared error (MISE) of the minimum $\phi$-divergence plug-in estimators of $qZ$ and $qD$ for samples from the Weibull and Dagum distribution.
For a~plug-in estimator $\widehat{qZ}_n^\phi$ of $qZ$ we define MISE as follows,
\begin{align*}
    \text{MISE}\left(\widehat{qZ}_{n}^\phi\right)=\mathbb{E}
    \left(
    \int_0^1 
    \left[qZ(p;\widehat{Q}_n^\phi)-qZ(p;Q_{\theta_0})\right]^2\,dp
    \right).
\end{align*}
For an estimator $\widehat{qD}_n^\phi$ of $qD$, MISE is defined analogously.
The estimated MISE of the plug-in estimators of $qZ$ and $qD$, respectively, based on minimum $\phi$-divergence estimators of $\theta$ are presented
in Tables \ref{tab:mise_weibull_weibull_qz} and \ref{tab:mise_weibull_weibull_qd} for samples drawn from the Weibull distribution.
Analogously for the Dagum distribution, MISE of the plug-in estimators of $qZ$ and $qD$, respectively, are presented
in Tables \ref{tab:mise_dagum_dagum_qz} and \ref{tab:mise_dagum_dagum_qd}.
The integral required to obtain MISE was computed numerically using the \textit{quad} function from \textit{scipy.integrate} in Python.
Assuming the Dagum distribution, $\chi^2_P$ gives the lowest estimated MISE, followed by PD with $\lambda=\frac{2}{3}$ in all cases, except for MISE of the estimators of $qZ$, where PD with $\lambda=\frac{2}{3}$ leads to an estimator with the lowest MISE for one combination of parameters (when $r$ is low).

\begin{table}
\caption{MISE (multiplied by 1000) of the minimum $\phi$-divergence plug-in estimators of $qZ$ for 1000 samples drawn from the Weibull distribution, assuming the Weibull distribution}
\label{tab:mise_weibull_weibull_qz}
\begin{tabular}{l|rrrrr}
\toprule
$\beta$ & HD & JSD & KLD & PD & $\chi^2_P$ \\
\midrule
0.5 & 3.536 & 3.512 & 3.360 & 3.244 & \textbf{3.217} \\
0.8 & 1.204 & 1.193 & 1.121 & 1.058 & \textbf{1.040} \\
1.0 & 0.752 & 0.745 & 0.700 & 0.663 & \textbf{0.653} \\
1.2 & 0.462 & 0.458 & 0.432 & 0.410 & \textbf{0.403} \\
1.6 & 0.237 & 0.235 & 0.221 & 0.208 & \textbf{0.205} \\
2.0 & 0.146 & 0.145 & 0.136 & 0.129 & \textbf{0.127} \\
\bottomrule
\end{tabular}
\end{table}

\begin{table}
\caption{MISE (multiplied by 1000) of the minimum $\phi$-divergence plug-in estimators of $qD$ for 1000 samples drawn from the Weibull distribution, assuming the Weibull distribution}
\label{tab:mise_weibull_weibull_qd}
\begin{tabular}{l|rrrrr}
\toprule
$\beta$ & HD & JSD & KLD & PD & $\chi^2_P$ \\
\midrule
0.5 & 3.286 & 3.265 & 3.125 & 3.020 & \textbf{2.995} \\
0.8 & 1.115 & 1.104 & 1.037 & 0.980 & \textbf{0.963} \\
1.0 & 0.702 & 0.699 & 0.659 & 0.627 & \textbf{0.617} \\
1.2 & 0.445 & 0.442 & 0.419 & 0.398 & \textbf{0.392} \\
1.6 & 0.242 & 0.240 & 0.225 & 0.213 & \textbf{0.209} \\
2.0 & 0.157 & 0.157 & 0.147 & 0.140 & \textbf{0.138} \\
\bottomrule
\end{tabular}
\end{table}

\begin{table}
\caption{MISE (multiplied by 1000) of the minimum $\phi$-divergence plug-in estimators of $qZ$ for 1000 samples drawn from the Dagum distribution, assuming the Dagum distribution}
\label{tab:mise_dagum_dagum_qz}
\begin{tabular}{c|rrrrr}
\toprule
$(a,r,b)$ & HD & JSD & KLD & PD & $\chi^2_P$ \\
\midrule
(2.8, 0.5, 8.0) & 1.722 & 1.683 & 1.559 & 1.466 & \textbf{1.440} \\
(2.2, 0.4, 9.0) & 1.293 & 1.286 & 1.243 & 1.212 & \textbf{1.203} \\
(4.6, 0.3, 11.0) & 2.046 & 1.919 & 1.681 & 1.545 & \textbf{1.538} \\
(4.0, 0.7, 4.0) & 1.889 & 1.855 & 1.742 & 1.656 & \textbf{1.632} \\
(6.0, 0.15, 15.0) & 1.644 & 1.635 & 1.520 & \textbf{1.434} & 1.456 \\
(3.2, 0.7, 4.0) & 2.098 & 2.081 & 2.018 & 1.960 & \textbf{1.939} \\
(3.7, 0.3, 13.0) & 1.758 & 1.726 & 1.589 & 1.511 & \textbf{1.494} \\
\bottomrule
\end{tabular}
\end{table}

\begin{table}
\caption{MISE (multiplied by 1000) of the minimum $\phi$-divergence plug-in estimators of $qD$ for 1000 samples drawn from the Dagum distribution, assuming the Dagum distribution}
\label{tab:mise_dagum_dagum_qd}
\begin{tabular}{c|rrrrr}
\toprule
$(a,r,b)$ & HD & JSD & KLD & PD & $\chi^2_P$ \\
\midrule 
(2.8, 0.5, 8.0) & 0.750 & 0.737 & 0.681 & 0.639 & \textbf{0.628} \\
(2.2, 0.4, 9.0) & 0.787 & 0.783 & 0.750 & 0.720 & \textbf{0.710} \\
(4.6, 0.3, 11.0) & 1.031 & 0.976 & 0.857 & 0.794 & \textbf{0.793} \\
(4.0, 0.7, 4.0) & 0.748 & 0.734 & 0.679 & 0.634 & \textbf{0.620} \\
(6.0, 0.15, 15.0) & 1.126 & 1.125 & 1.038 & 0.979 & \textbf{0.965} \\
(3.2, 0.7, 4.0) & 0.815 & 0.808 & 0.777 & 0.747 & \textbf{0.735} \\
(3.7, 0.3, 13.0) & 0.911 & 0.899 & 0.819 & 0.776 & \textbf{0.765} \\
\bottomrule
\end{tabular}
\end{table}

\subsection{Summary of the results}

Our simulation study comparing results obtained with multiple divergences shows that Pearson's chi-squared divergence leads to estimates with the lowest MSE in most cases, only slightly better than PD with $\lambda=\frac{2}{3}$.
The differences between the parameter estimators (and consequently between the inequality measure estimators), obtained with various divergences considered in this study, are very small.

\section{Real data analysis}\label{sec:real_data}
Two data sets were analysed to demonstrate the usefulness of the methods described above. 
The first shows possible applications in measuring income inequalities based on income data obtained from a~survey with limited options.
The second example is based on complete data, and its purpose is to compare the estimates of the indices based on these complete data with the estimates obtained after grouping.


\subsection{Income data from a~survey}
We analyse a~data set consisting of questionnaires filled out in the San Francisco Bay Area by customers of a~certain shopping mall (see, for example, Section 14.2.3 in \cite{Hastie2009}).
This data set is available in the \textit{IncomeESL} object of the \textit{arules} R package.
For each individual, the information about income is given by an interval. 
The edges of the income intervals are shown in Table \ref{tab:dagum_income_esl_probs}.
The survey also contains information on several demographic variables, including sex (\textit{sex}), age interval (\textit{age}), and education level (\textit{education}).

\begin{table}
\caption{Observed $P$ and fitted $G\left(\hat{\theta}^\phi_n\right)$ probabilities for the subsamples of \textit{incomeESL} data with various characteristics. The fitted probabilities were obtained by minimising $\chi^2_P$ statistic.}
\label{tab:dagum_income_esl_probs}
\begin{tabular}{c|lr|lr|lr|lr}
\toprule
& \multicolumn{4}{c|}{25-34} &  \multicolumn{4}{c}{35-44}  \\
& \multicolumn{2}{c|}{men} & \multicolumn{2}{c|}{women} & \multicolumn{2}{c|}{men} & \multicolumn{2}{c}{women} \\
income & $P$ & $G\left(\hat{\theta}^\phi_n\right)$ & $P$ & $G\left(\hat{\theta}^\phi_n\right)$ & $P$ & $G\left(\hat{\theta}^\phi_n\right)$ & $P$ & $G\left(\hat{\theta}^\phi_n\right)$ \\
\midrule
$[0,10)$ & 0.064 & 0.061 & 0.039 & 0.037 & 0.000 & 0.002 & 0.021 & 0.018 \\
$[10,15)$ & 0.064 & 0.063 & 0.050 & 0.055 & 0.006 & 0.010 & 0.021 & 0.025 \\
$[15,20)$ & 0.074 & 0.080 & 0.074 & 0.079 & 0.037 & 0.027 & 0.016 & 0.037 \\
$[20,25)$ & 0.080 & 0.091 & 0.110 & 0.095 & 0.049 & 0.053 & 0.053 & 0.049 \\
$[25,30)$ & 0.117 & 0.097 & 0.124 & 0.101 & 0.080 & 0.078 & 0.080 & 0.061 \\
$[30,40)$ & 0.177 & 0.185 & 0.142 & 0.188 & 0.178 & 0.196 & 0.165 & 0.151 \\
$[40,50)$ & 0.147 & 0.145 & 0.142 & 0.142 & 0.202 & 0.182 & 0.133 & 0.167 \\
$[50,75)$ & 0.181 & 0.182 & 0.202 & 0.181 & 0.258 & 0.263 & 0.330 & 0.310 \\
$75+$ & 0.097 & 0.096 & 0.117 & 0.122 & 0.190 & 0.189 & 0.181 & 0.181 \\
\midrule
p-value & \multicolumn{2}{c|}{0.583} & \multicolumn{2}{c|}{0.096} & \multicolumn{2}{c|}{0.629} & \multicolumn{2}{c}{0.144} \\
 $n$ & \multicolumn{2}{c|}{299} & \multicolumn{2}{c|}{282} & \multicolumn{2}{c|}{163} & \multicolumn{2}{c}{188} \\
\bottomrule
\end{tabular}
\end{table}

We analyse the level of income inequality among college graduates of both sexes from two age intervals 25--34 and 35--44.
The Dagum distribution was fitted to all four data samples considered using the $\chi^2_P$ statistic.
The comparison of the observed and expected probabilities in each bin can be seen in Table \ref{tab:dagum_income_esl_probs}.
At the bottom of Table \ref{tab:dagum_income_esl_probs}, we can also see the sizes of the samples, varying between 163 and 299, and the p-values of the chi-squared test, indicating that there is no evidence to reject the hypothesis of the goodness-of-fit to the Dagum distribution when the level of significance is 0.05.

\begin{figure}
    \centering
    \includegraphics[width=1.0\linewidth]{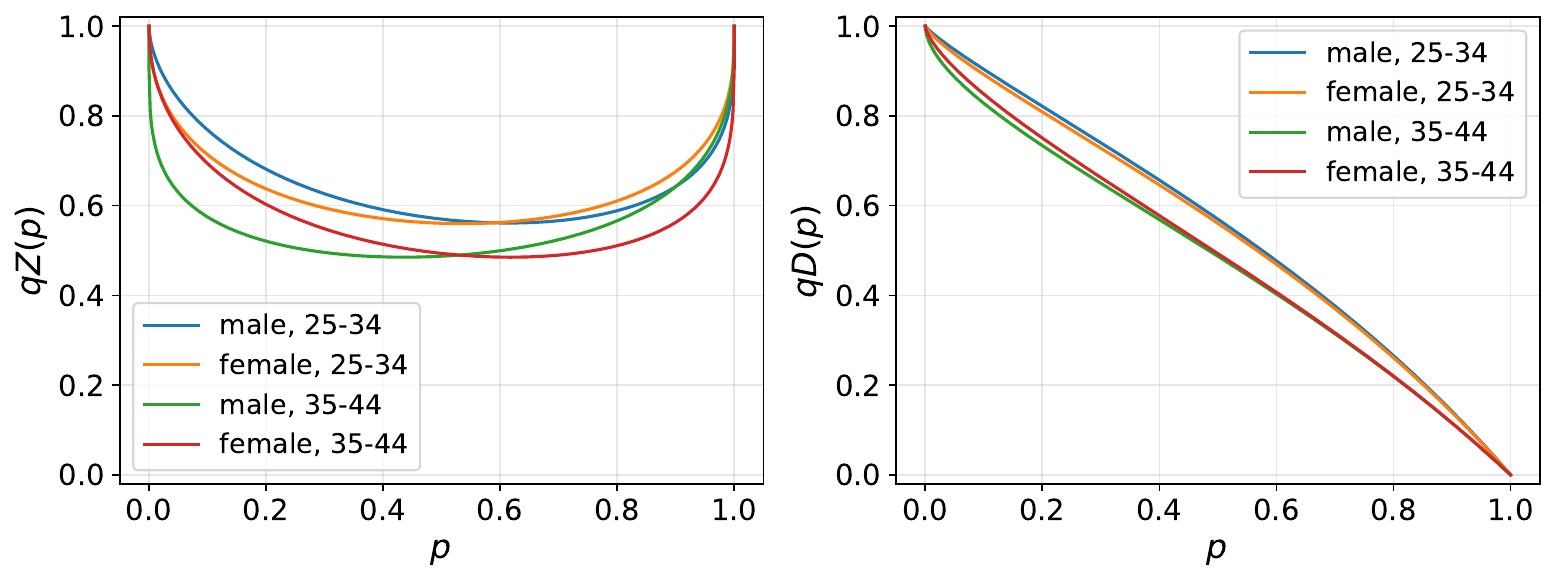}
    \caption{Estimated quantile inequality curves, $qZ$ (left panel) and $qD$ (right panel), by demographic group for \textit{IncomeESL} data set}
    \label{fig:dagum_qZ_qD_income_esl}
\end{figure}

Table \ref{tab:qzi_qdi_income_esl} shows the estimated values of \qziqdi in all four samples considered in this analysis. We can see that within a~given age group, the difference between the level of income inequality of men and women is rather small (approximately 0.01 for both \qziqdi).
However, for both men and women, the difference between the income inequality of younger (25--34) and older (35--44) individuals is significant, showing that the income inequality among older individuals is much lower. In the case of $qZI$ the difference is 0.089 for men and 0.066 for women, while in the case of $qDI$ it is 0.065 for men and 0.049 for women.
Both indices show that the income inequality is higher among men in the group of individuals of age 25--34, while among those of age 35--44 the income inequality is higher among women.
Figure \ref{fig:dagum_qZ_qD_income_esl} shows the estimated curves $qZ$ (left panel) and $qD$ (right panel) for these four groups.
Interestingly, in both age groups, the $qD$ curves are almost identical for men and women.
However, $qZ$ curves differ significantly between men and women, showing that $qZ$ describes different aspects of income inequality than $qD$.
This example shows that $qZ$ curves corresponding to two different distributions can differ visibly even if the values of the indices are very close.

\begin{table}[htbp]
    \caption{Estimated quantile inequality measures by demographic group for \textit{IncomeESL} data set}
    \label{tab:qzi_qdi_income_esl}
    \centering
    \begin{tabular}{lccc}
        \toprule
        Age & Sex & $q{ZI}$ & $q{DI}$ \\
        \midrule
        25--34 & male   & 0.644 & 0.545 \\
        25--34 & female & 0.634 & 0.537 \\
        35--44 & male   & 0.555 & 0.480 \\
        35--44 & female & 0.568 & 0.488 \\
        \bottomrule
    \end{tabular}
\end{table}

\subsection{Current Population Survey}

We analyse a~data set of the Current Population Survey (CPS) \cite{fedesoriano} that contains information about salaries, gathered in the USA between 1981 and 2013.
Since the data set is huge, we narrow the scope of this analysis to the salaries of male production workers in 1990.
The income inequality within these groups was then analysed.
We grouped 589 observations for men aged 25--29 and 660 observations for men aged 30--34 into 10 bins with edges given in Table \ref{tab:men_wage_fitted_weibull}.
Since complete samples are available, we group them ourselves and check whether estimation from the grouped data yields values close to those obtained from the complete data.
The $p$-values of the $\chi^2$ (with 7 degrees of freedom) goodness-of-fit test checking if the Weibull distribution fits the grouped data were 0.061 and 0.083, respectively, for younger and older employees, so we do not have evidence to reject the null hypothesis about the goodness-of-fit to the Weibull distribution on the level of significance 0.05.

\begin{table}[H]
\caption{Observed and fitted probabilities of falling into a~given wage group by men aged 25--29 and 30--34 working as production workers. Expected probabilities were obtained by fitting the Weibull distribution minimising the $\chi^2_P$ statistic.}
\label{tab:men_wage_fitted_weibull}
\begin{tabular}{c|rr|rr}
\toprule
age group & \multicolumn{2}{c|}{25--29} & \multicolumn{2}{c}{30--34} \\
yearly wage bins & & & & \\
(in thousands of USD)& observed & expected & observed & expected \\
\midrule
0 -- 5 & 0.0374 & 0.0332 & 0.0242 & 0.0198 \\
5 -- 10 & 0.0849 & 0.1069 & 0.0636 & 0.0738 \\
10 -- 15 & 0.1715 & 0.1640 & 0.1152 & 0.1271 \\
15 -- 20 & 0.2003 & 0.1869 & 0.1803 & 0.1623 \\
20 -- 25 & 0.1868 & 0.1741 & 0.1727 & 0.1705 \\
25 -- 30 & 0.1307 & 0.1375 & 0.1561 & 0.1530 \\
30 -- 35 & 0.0849 & 0.0935 & 0.1227 & 0.1192 \\
35 -- 40 & 0.0526 & 0.0552 & 0.0742 & 0.0812 \\
40 -- 50 & 0.0407 & 0.0412 & 0.0682 & 0.0742 \\
50 -- $\infty$ & 0.0102 & 0.0075 & 0.0227 & 0.0190 \\
\bottomrule
\end{tabular}
\end{table}

We obtained the estimated values of the parameters of the Weibull distribution by minimising the $\chi^2_P$ statistic.
The estimated values of the Weibull distribution parameters and consequently the estimated values of the inequality indices can be seen in Table \ref{tab:men_wage_indices}.
We can see in Table \ref{tab:men_wage_indices} that the values estimated based on grouped data are very close to the values estimated for the complete samples.
Table \ref{tab:men_wage_indices} contains also information about the confidence intervals of the estimated values of the indices, based on the normal approximation obtained in Theorem \ref{thm:qzi_qdi_as_norm}.
Figure \ref{fig:weibull_qZ_qD_cps} shows the estimated curves $qZ$ (left panel) and $qD$ (right panel) for both age groups.
We can see that both in the case of $qZ$ and $qD$, the curves for both age groups are very similar, which is a~direct consequence of the fact that the difference between the estimated values of the shape parameter is small (0.135).

\begin{table}[H]
    \caption{Estimated values of the parameters of the Weibull distribution based on complete sample (ML estimator) and based on frequencies of the grouped data ($\chi^2_P$ estimator) and the values of the plug-in estimators of $qZI$ and $qDI$ in the case of CPS data}
    \label{tab:men_wage_indices}
    \begin{tabular}{c|cc|cc}
    \toprule
     & \multicolumn{2}{c|}{25--29} & \multicolumn{2}{c}{30--34} \\
         & ML & $\chi^2_P$ & ML & $\chi^2_P$ \\
    \midrule
    $\left(\hat{\beta}, \hat{\sigma}\right)$ & (2.124, 23.312) & (2.162, 23.982) & (2.256, 26.940) & (2.297, 27.459) \\
    $qZI$ & 0.581 & 0.575 & 0.560 &  0.553\\
    $qZI$ CI & (0.580, 0.583) & (0.573 0.576) & (0.559, 0.562) & (0.552 0.555) \\
    $qDI$ & 0.506 & 0.502 & 0.490 &  0.485\\
    $qDI$ CI & (0.506, 0.508) & (0.500 0.503) & (0.490, 0.492) & (0.484 0.486) \\
    \bottomrule
    \end{tabular}
\end{table}

\begin{figure}
    \centering
    \includegraphics[width=1.0\linewidth]{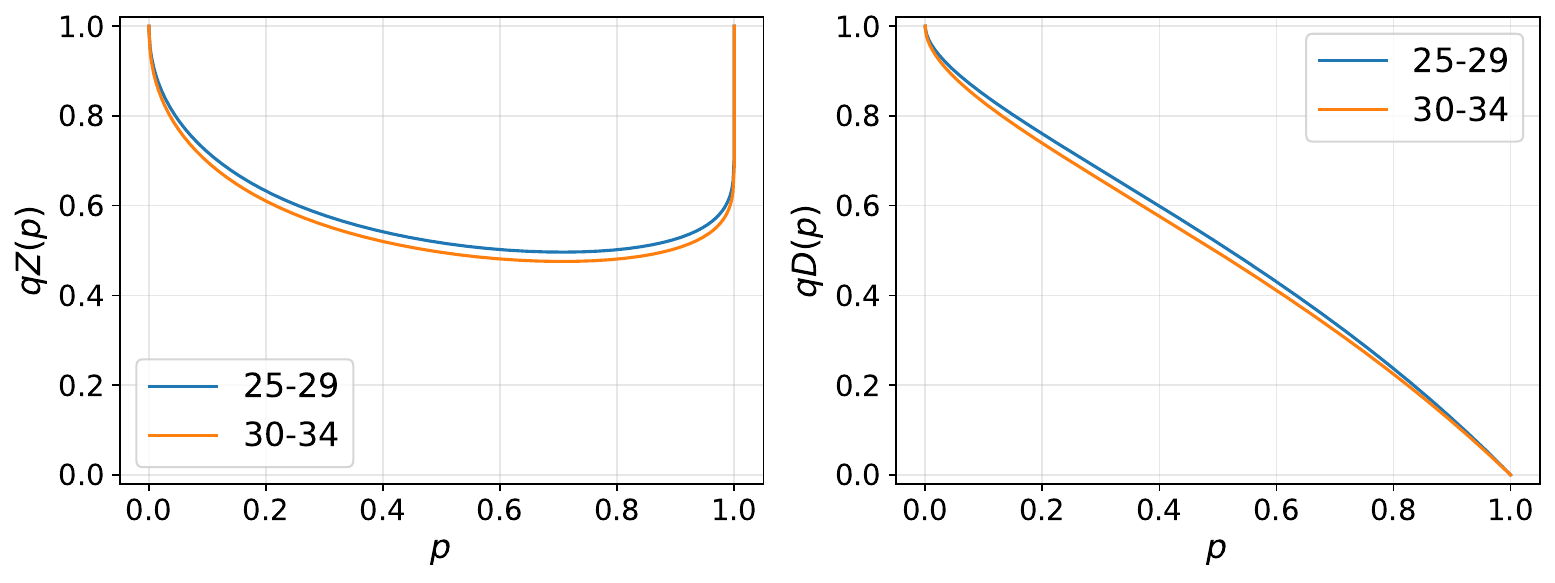}
    \caption{Estimated quantile inequality curves, $qZ$ (left panel) and $qD$ (right panel), by age group for \textit{CPS} data set}
    \label{fig:weibull_qZ_qD_cps}
\end{figure}

\section{Conclusions}\label{sec:conclusions}

This paper explores the problem of estimating the inequality indices based on grouped data.
A parametric approach with various minimum $\phi$-divergences was applied to estimate the parameters $\theta$ of the assumed distribution of the incomes.
The sufficient assumptions for such a~minimum $\phi$-divergence estimator of $\theta$ to be the best asymptotically normal (BAN) were described.
The plug-in estimators of the quantile inequality curves $qZ$ and $qD$ were shown to be consistent, while the plug-in estimators of the measures $qZI$ and $qDI$ were shown to be asymptotically normal.
A simulation study shows that the power divergence with $\lambda=\frac{2}{3}$ and Pearson's chi-squared divergence lead to the most stable estimators 
that attain nearly minimal MSE, with $\chi^2_P$ marginally ahead.

\subsection*{Data and Code Availability}\label{sec:data_and_code}
The code used to reproduce the simulation study, the empirical analysis, and the figures presented in this article are available on a~dedicated GitHub repository:
\url{https://github.com/skfp/grouped_data}.


\vspace{1cm}
\noindent
\textbf{Acknowledgement.} S.P. acknowledges the support of Dioscuri programme initiated by the Max Planck Society, jointly managed with the National Science Centre (Poland), and mutually funded by the Polish Ministry of Science and Higher Education and the German Federal Ministry of Research, Technology and Space.\\
S.P. was supported by the National Science Centre (NCN), Poland, under the Preludium funding scheme, grant number 2025/57/N/HS4/02684.

\bibliographystyle{acm}

\end{document}